\newcommand{\comment}[1]{}
\newtheorem{theorem}{Theorem}
\newtheorem{lemma}[theorem]{Lemma}
\newtheorem{remark}[theorem]{Remark}
\newtheorem{conjecture}[theorem]{Conjecture}
\newcommand\La{\Lambda}
\newcommand\de{\delta}
\newcommand\al{\alpha}
\newcommand\f{\varphi}
\newcommand\dbj{\overline{d}_j}
\newcommand{\NN}{\mathbb N}
\newcommand{\ZZ}{\mathbb Z}
\newcommand{\RR}{\mathbb R}
\newcommand{\TT}{\mathbb T}
\newcounter{rek}
\newcounter{rev}
\newcommand{\rev}[1]{
    \marginpar{\refstepcounter{rev}{\small\therev(Sz.R.): #1}}
}
\newcounter{rem}
\newcounter{rec}
\begin{document}

\title[Hardy-Littlewood majorant problem]{On MOckenhoupt's Conjecture in the
Hardy-Littlewood majorant problem}\thanks{Supported in part by the Hungarian National
Foundation for Scientific Research, Project \#'s K-81658 and K-100461.}

\author{S\'andor Krenedits}

\date{\today}



\begin{abstract} The Hardy-Littlewood majorant problem has a positive answer only for exponents $p$ which are even integers, while there are counterexamples for all $p\notin 2\NN$. Montgomery conjectured that even among the idempotent polynomials there must exist some counterexamples, i.e. there exist some finite set of characters and some $\pm$ signs with which the signed character sum has larger $p^{\rm th}$ norm than the idempotent obtained with all the signs chosen $+$ in the character sum. That conjecture was proved recently by Mockenhaupt and Schlag.

However, Mockenhaupt conjectured that even the classical $1+e^{2\pi i x} \pm e^{2\pi i (k+2)x}$ three-term character sums, used for $p=3$ and $k=1$ already by Hardy and Littlewood, should work in this respect. That remained unproved, as the construction of Mockenhaupt and Schlag works with four-term idempotents. In our previous work we proved this conjecture for $k=0,1, 2$, i.e. in the range $0<p<6$, $p\notin 2\NN$.

Continuing this work here we demonstrate that even the $k=3,4$ cases hold true. Several refinement in the technical features of our approach include improved fourth order quadrature formulae, finite estimation of $G'^2/G$ (with $G$ being the absolute value square function of an idempotent), valid even at a zero of $G$, and detailed error estimates of approximations of various derivatives in subintervals, chosen to have accelerated convergence due to smaller radius of the Taylor approximation.
\end{abstract}

\maketitle

\vskip1em \noindent{\small \textbf{Mathematics Subject
Classification (2000):} Primary 42A05. \\[1em]
\textbf{Keywords:} idempotent exponential polynomials, Hardy-Littlewood majorant problem, Montgomery conjecture, Mockenhaupt conjecture, concave functions, Taylor polynomials, quadrature formulae.}



\section{Introduction}\label{sec:intro}

We denote, as usual, $\TT:=\RR/\ZZ$ the one dimensional torus
or circle group. Following Hardy and Litlewood \cite{HL}, $f$ is
said to be a majorant to $g$ if $|\widehat{g}|\leq \widehat{f}$.
Obviously, then $f$ is necessarily a positive definite function.
The (upper) majorization property (with constant 1) is the
statement that whenever  $f\in L^p(\TT)$ is a majorant of $g\in
L^p(\TT)$, then $\|g\|_p\leq \|f\|_p$. Hardy and Littlewood proved
this for all $p\in 2\NN$ -- this being an easy consequence of the
Parseval identity. On the other hand already Hardy and Littlewood
observed that this fails for $p=3$. Indeed, they took
$f=1+e_1+e_3$ and $g=1-e_1+e_3$ (where here and the sequel we
denote $e_k(x):=e(kx)$ and $e(t):=e^{2\pi i t}$, as usual) and
calculated that $\|f\|_3<\|g\|_3$.

The failure of the majorization property for $p\notin 2\NN$ was
shown by Boas \cite{Boas}. Boas' construction exploits complex Taylor
series expansion around zero: for $2k<p<2k+2$ the counterexample
is provided by the polynomials $f,g:=1+ re_1\pm r^{k+2}e_{k+2}$,
with $r$ sufficiently small to make the effect of the first terms
dominant over later, larger powers of $r$.

Utilizing Riesz products -- an idea suggested to him by Y. Katznelson -- Bachelis proved \cite{Bac} the failure of the majorization property for any $p\notin 2\NN$ even with arbitrarily large constants. That is, not even $\|g\|_p < C_p
\|f\|_p$ holds with some fixed constant $C=C_p$.

Montgomery conjectured that the majorant property for $p\notin
2\NN$ fails also if we restrict to \emph{idempotent} majorants,
see \cite[p. 144]{Mon}. (A suitable integrable function is
idempotent if its convolution square is itself: that is, if its
Fourier coefficients are either 0 or 1.) This has been recently
proved by Mockenhaupt and Schlag in \cite{MS}.

\begin{theorem}[{\bf Mockenhaupt \& Schlag}]\label{th:MMS} Let
$p>2$ and $p\notin 2\NN$, and let $k>p/2$ be arbitrary. Then for
the trigonometric polynomials $g:=(1+e_k)(1-e_{k+1})$ and
$f:=(1+e_k)(1+e_{k+1})$ we have $\|g\|_p >\|f\|_p$.
\end{theorem}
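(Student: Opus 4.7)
The strategy is to compare $\|f\|_p$ and $\|g\|_p$ by factoring the absolute squares, expanding a binomial series, and reducing everything to the sign of a single explicit Fourier sum.

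First I would write
\[
|f(x)|^{2}=4\,u(x)\bigl(1+v(x)\bigr),\qquad |g(x)|^{2}=4\,u(x)\bigl(1-v(x)\bigr),
\]
with $u(x):=1+\cos 2\pi kx\ge 0$ and $v(x):=\cos 2\pi(k+1)x\in[-1,1]$. Since $p/2>1$ is not an integer, the generalized binomial series $(1\pm v)^{p/2}=\sum_{n\ge0}\binom{p/2}{n}(\pm v)^{n}$ converges absolutely and uniformly on $[-1,1]$ (the coefficients satisfy $|\binom{p/2}{n}|=O(n^{-1-p/2})$). Subtracting cancels the even powers, and by term-by-term integration
\[
\|g\|_p^p-\|f\|_p^p \;=\; -2\cdot 4^{p/2}\sum_{j\ge 0}\binom{p/2}{2j+1}\int_{0}^{1}u(x)^{p/2}\,v(x)^{2j+1}\,dx,
\]
so the theorem reduces to showing that this series is strictly negative.

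Next I would invoke a frequency orthogonality. The function $u^{p/2}=2^{p/2}|\cos\pi kx|^{p}$ is periodic with period $1/k$ and therefore has Fourier support in $k\ZZ$, while $v^{2j+1}=\cos^{2j+1}(2\pi(k+1)x)$ has spectrum in $(k+1)\{\pm 1,\pm 3,\dots,\pm(2j+1)\}$. Because $\gcd(k,k+1)=1$, the integral above vanishes unless some odd integer $m$ with $|m|\le 2j+1$ is divisible by $k$; in particular every term with $2j+1<k$ drops out. Thus the sum effectively starts at the smallest admissible index $j_{0}$ (for $k$ odd, $j_{0}=(k-1)/2$, so $2j_{0}+1=k$), and the leading contribution is a single explicit Fourier coefficient of $|\cos\pi kx|^{p}$ multiplied by a binomial prefactor from $\cos^{2j_{0}+1}$.

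The last step is to identify the sign. Under $k>p/2$, the binomial coefficient $\binom{p/2}{2j_{0}+1}$ has a definite sign (determined by how many of its $2j_{0}+1$ factors have crossed zero), which combined with the sign of the matching Fourier coefficient of $|\cos\pi kx|^{p}$ (computable e.g.\ via the Beta function) yields a \emph{negative} leading summand, hence a positive right-hand side as desired. The \textbf{main obstacle} is to prove that the tail $\sum_{j>j_{0}}$ cannot overturn this sign: one needs sharp decay estimates for the Fourier coefficients of $|\cos\pi kx|^{p}$ at high frequencies (obtainable via stationary-phase expansion near the zeros of $\cos$, or via Beta-function identities) balanced against $|\binom{p/2}{n}|=O(n^{-1-p/2})$, combined into a geometric or telescoping bound that leaves the leading term dominant. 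A parenthetical subtlety is that for $k$ even no odd $2j+1$ is divisible by $k$, so every term in the series vanishes; the essential case of the theorem is therefore $k$ odd, treated as above.
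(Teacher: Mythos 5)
The paper does not prove this theorem; it is quoted from \cite{MS} and used as background, so there is no internal proof to compare your attempt against. Evaluating the attempt on its own merits, two points stand out.

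First, you have in fact discovered that the statement as reproduced in the paper cannot be literally correct. For $k$ even, the translation $x\mapsto x+\tfrac12$ satisfies $e_k(x+\tfrac12)=e_k(x)$ and $e_{k+1}(x+\tfrac12)=-e_{k+1}(x)$, hence $g(x+\tfrac12)=f(x)$ and $\|g\|_p=\|f\|_p$ identically. Your frequency-support argument shows the same thing from the Fourier side (no odd integer is divisible by an even $k$, so every term of your series vanishes). This is worth stating as a correction rather than a ``parenthetical subtlety'': either the original Mockenhaupt--Schlag statement carries a parity restriction, or a different polynomial, and the version quoted here drops it.

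Second, the core of the theorem is precisely the step you label ``the main obstacle,'' and it is not done. After the orthogonality reduction, the leading term $j_0=(k-1)/2$ contributes $\binom{p/2}{k}$ times the Fourier coefficient of $2^{p/2}|\cos\pi k x|^p$ at frequency $k(k+1)$ (a Beta/Gamma expression in $p$ and $k$), and one can check the signs conspire to give the right inequality. But the higher terms are not merely smaller copies of the same thing: as $j$ grows past $(3k-1)/2$, $(5k-1)/2,\dots$, new frequencies $3k(k+1),\ 5k(k+1),\dots$ enter the picture, each contributing a different Gamma-quotient whose sign also oscillates, and the binomial weights $\binom{p/2}{2j+1}$ decay only like $j^{-1-p/2}$. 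Demonstrating that the leading term dominates this alternating, slowly-decaying sum requires a genuine argument — sign bookkeeping plus quantitative Gamma-function estimates — not just the qualitative remark that suitable decay estimates ``combined into a geometric or telescoping bound'' should work. As written, the proposal is a plausible reduction of the problem, not a proof. (For reference, the Mockenhaupt--Schlag argument exploits the factorizations $|f|^p=4^p|\cos\pi kx|^p|\cos\pi(k+1)x|^p$ and $|g|^p=4^p|\cos\pi kx|^p|\sin\pi(k+1)x|^p$ more directly: both factors have the same Fourier coefficients, one supported on $k\ZZ$, the other on $(k+1)\ZZ$, so the integral collapses to a one-parameter sum $\sum_l a_{kl}\,a_{(k+1)l}$ of Beta-function values — a cleaner object to estimate than your tail.)
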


Oddly enough, the quite nice, constructive example is given with a
four-term idempotent polynomial, although trinomials may seem
simpler objects to study. Indeed, there is a considerable
knowledge, even if usually for the maximum norm, on the space of
trinomials, see e.g. \cite{Cheb,Neu,Mini}. Note that three-term examples are the simplest we can ask for, as two-term polynomials can never exhibit failure of the majorization property. In the construction of Mockenhaupt and Schlag, however, the key role is played by the fact that the given 4-term idempotent is the product of two two-term idempotents, the $p^{\rm th}$ power integral of which then can be expressed by the usual trigonometric and hyperbolic functions. So even if four terms is a bit more complicated, but the product form gives way to a manageable calculation.

Nevertheless, one may feel that Boas' idea, i.e. the idea of
cancelation in the $(k+1)^{\rm st}$ Fourier coefficients works
even if $r$ is not that small -- perhaps even if $r=1$. The
difficulty here is that the binomial series expansion diverges,
and we have no explicit way to control the interplay of the
various terms occurring with the $\pm$ signed versions of our
polynomials. But at least there is one instance, the case of
$p=3$, when all this is explicitly known: already Hardy and
Littlewood \cite{HL} observed that failure of the majorant
property for $p=3$ is exhibited already by the pair of idempotents
$1+e_1\pm e_3$. In fact, this idempotent example led Montgomery to
express (in a vague form, however, see \cite{Mon}, p. 144) his conjecture on existence of \emph{idempotent} counterexamples.

There has been a number of attempts on the Montgomery problem. In particular, led by the examples of Hardy-Littlewood and Boas, Mockenhaupt \cite{Moc} expressed his view that $1+e_1\pm e_{k+2}$, where $2k<p<2k+2$, should provide a counterexample in the Hardy-Littlewood majorant problem, (at least for $k=1,2$). So we are to discuss the following reasonably documented conjecture.

\begin{conjecture}\label{conj:con3} Let $2k<p<2k+2$, where $k\in \NN$ arbitrary.
Then the three-term idempotent polynomial $P_k:=1+e_1+e_{k+2}$ has
smaller $p$-norm than $Q_k:=1+e_1-e_{k+2}$.
\end{conjecture}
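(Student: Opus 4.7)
The plan is to work with the nonnegative trigonometric polynomials $F_k := |P_k|^2$ and $G_k := |Q_k|^2$; explicitly,
\[
F_k(x) = 3 + 2\cos 2\pi x + 2\cos 2\pi(k+1)x + 2\cos 2\pi(k+2)x,
\]
with $G_k$ obtained by flipping the signs of the last two cosines. Setting $\Phi_k(p) := \iT G_k^{p/2}\,dx - \iT F_k^{p/2}\,dx = \|Q_k\|_p^p - \|P_k\|_p^p$, Conjecture~\ref{conj:con3} becomes $\Phi_k(p) > 0$ on $(2k, 2k+2)$. The first step is to note that $\Phi_k$ vanishes at both endpoints: for $m = k$ or $m = k+1$, the multinomial expansion of $P_k^m$ (resp.\ $Q_k^m$) produces pairwise distinct frequencies $b + (k+2)c$ for triples $(a,b,c)$ with $a+b+c = m$ (the relation $|b-b'| = (k+2)|c-c'|$ forces $c=c'$ because $b,b' \leq k+1$), so the Fourier coefficients of $P_k^m$ and $Q_k^m$ differ only by signs. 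Parseval then gives $\Phi_k(2k) = \Phi_k(2k+2) = 0$. It therefore suffices to prove that $\Phi_k$ is strictly concave on $[2k, 2k+2]$, which reduces to the inequality
\[
\iT (\log G_k)^2 G_k^{p/2}\,dx < \iT (\log F_k)^2 F_k^{p/2}\,dx, \qquad p \in [2k, 2k+2].
\]

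\textbf{Reduction to a finite numerical check.} I would attack this inequality by combining the two devices flagged in the abstract. On the $p$-variable, a fourth-order Taylor expansion of each integrand around a central value $p_0 \in [2k, 2k+2]$ (say the midpoint $p_0 = 2k+1$) replaces the one-parameter family by finitely many $p$-values, with the remainder controlled by integrals of fifth powers of $|\log F_k|$ and $|\log G_k|$ against $F_k^{p/2}$, $G_k^{p/2}$. On the $x$-variable, each of these integrals is evaluated by a composite fourth-order quadrature formula on a fine partition of $\TT$, the Peano-kernel error being of order $h^4$ times the sup-norm of the fourth derivative of the integrand. The whole scheme reduces the proof to finitely many explicit numerical inequalities, verifiable by interval arithmetic.

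\textbf{Main obstacle: controlling singular factors.} The principal technical difficulty, and what distinguishes $k=3,4$ from the earlier $k \leq 2$, is that $G_k$ can vanish (or come very close to vanishing) on $\TT$, making $\log G_k$ singular and inserting apparently divergent factors $G_k'^2/G_k$, $G_k''/G_k$ and higher iterates into the fourth derivatives of $(\log G_k)^j G_k^{p/2}$. The key lemma I would develop is a \emph{finite} global bound for $G_k'^2/G_k$, valid even at zeros: since $G_k \geq 0$ forces $G_k'(x_0) = 0$ and $G_k''(x_0) \geq 0$ at any zero $x_0$, comparing $G_k$ with its second-order Taylor polynomial around $x_0$ yields such a bound in terms of $\|G_k''\|_\infty$ and $\|G_k'''\|_\infty$. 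Equipped with it, all fourth-derivative sup-norms appearing in the quadrature error analysis become explicitly finite, and the partition can be locally refined near zeros of $G_k$ (``subintervals with accelerated convergence due to smaller radius of the Taylor approximation'') to keep the total node count tractable. I expect the chief quantitative burden to lie precisely here: balancing local refinement, sharpness of the derivative bounds, and the margin required by the final numerical inequalities, while keeping every estimate rigorous.
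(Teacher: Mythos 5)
Your setup, boundary reduction by Parseval, the quadrature/Taylor scheme, and the idea of a finite global bound for $G'^2/G$ near zeros are all correct and closely match the paper's framework. But the central reduction step --- ``it therefore suffices to prove that $\Phi_k$ is strictly concave on $[2k,2k+2]$'' --- is a genuine gap: $\Phi_k$ is \emph{not} concave on this interval. In the paper's notation, with $d(t)=\tfrac12\Phi_k(2t)$, the numerical values reported are $d''(3)\approx 0.0876>0$ (for $k=3$) and $d''(4)\approx 0.054>0$ (for $k=4$); that is, $\Phi_k''(2k)>0$, so $\Phi_k$ is \emph{convex} at the left endpoint. A plan built on verifying $\Phi_k''<0$ pointwise would simply fail there, and no amount of quadrature refinement or derivative bounding fixes a false inequality. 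The situation is that $d$ is convex on a small initial subinterval and concave afterwards (an S-shape in $d''$), and the positivity of $d$ in the interior has to be extracted from this more delicate shape information.

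What the paper actually does is one (for $k=3$) or two (for $k=4$) levels up: it establishes $d'(k)>0$, $d''(k)>0$ (and $d'''(4)>0$ when $k=4$) by direct quadrature at the left endpoint, and then proves that a \emph{higher} derivative is globally negative --- $d^{IV}(t)<0$ on $[3,4]$, respectively $d^{V}(t)<0$ on $[4,5]$ --- via the Taylor-plus-quadrature machinery you outline. From this a chain of monotonicity and Rolle-type arguments shows that $d''$ changes sign exactly once from $+$ to $-$, hence $d'$ rises then falls with a single zero, hence $d$ has a single interior maximum (a ``cap'') and, vanishing at both endpoints, is strictly positive in between. If you want your proposal to go through, replace ``show $\Phi_k$ is concave'' with ``show $\Phi_k$ has a concave second (resp.\ third) derivative together with the appropriate positive low-order derivatives at $p=2k$,'' and route the shape argument through this sign-change cascade rather than through concavity of $\Phi_k$ itself. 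The rest of your toolkit (endpoint Parseval identity, $G'^2/G$ bound, subdivided Taylor expansions in $p$, fourth-order quadrature in $x$) then applies essentially as you describe, just aimed at $d^{IV}$ or $d^{V}$ instead of $d''$.
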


We have proved this for $k=0,1,2$ in \cite{Krenci}.

One motivation for us was the recent paper of Bonami and R\'ev\'esz \cite{AJM}, who used suitable idempotent polynomials as the base of their construction, via Riesz kernels, of highly concentrated ones in $L^p(\TT)$ for any $p>0$. These key idempotents of Bonami and R\'ev\'esz had special properties, related closely to the Hardy-Littlewood majorant problem. For details we refer to \cite{AJM}. For the history and relevance of this closely related problem of idempotent polynomial concentration in $L^p$ see \cite{AJM, L1conc}, the detailed introduction of \cite{Krenci}, the survey paper \cite{Ash2}, and the references therein.

As already hinted by Mockenhaupt's thesis \cite{Moc}, proving that $1+e_1\pm e_{k+2}$ would be a counterexamle in the Hardy-Littlewood majorant problem may require some numerical analysis as well. However, we designed a way to accomplish this differently than suggested by Mockenhaupt, for we don't know how to get it done along the lines hinted by him. Instead, in \cite{Krenci} we used function calculus and support our analysis by numerical integration and error estimates where necessary.

These methods are getting computationally more and more involved when $k$ is getting larger. Striving for a worst-case error bound in the usual Riemann numerical integration formula forces us to consider larger and larger step numbers (smaller and smaller step sizes) in the division of the interval $[0,1/2]$, where a numerical integration is to be executed. Therefore, for $k$ getting larger, we can as well expect the step numbers increase to a numerically extraneous amount, where calculations loose liability in view of the possibly accumulating small errors of the computation of the operations and regular function values -- powers, logarithms and trigonometrical or exponential functions -- involved. Any reader would readily accept a proof, which with a certain precise error estimate refers to a numerical integration formula on say a few hundred nodes, but perhaps no reader would be fully convinced reading that a numerical tabulation and integration on several tens of thousands of function values led to the numerical result. Correspondingly, in this paper we settle with the goal of keeping any numerical integration, i.e quadrature, under the step number (or number of nodes, division number) $N=500$, that is step size $h=0.001$.

Calculation of trigonometrical and exponential functions, as well as powers and logarithms, when within the numerical stability range of these functions (that is, when the variables of taking negative powers or logarithms is well separated from zero) are done by mathematical function subroutines of usual Microsoft excel spreadsheet, which computes the mathematical functions with 15 significant digits of precision. Although we do not detail the estimates of the computational error of applying spreadsheets and functions from Microsoft Excel tables, it is clear that under this step number size our calculations are reliable well within the error bounds. For a more detailed error analysis of that sort, which similarly applies here, too, see our previous work \cite{Krenci}, in particular footnote 3 on page 141 and the discussion around formula (22).

In view of the above considerations, instead of pushing forward exactly the same numerical analysis as done in \cite{Krenci} for $k=1,2$ also for higher values of $k$, (which could have been done at least for some $k$, though), here we renew the approach and invoke a number of new features of the numerical analysis. These "tricks" will enable us to keep $N$ below 500, and thus keep the invoked numerical calculations of quadratures reliable.

First, instead of the classical and simplest numerical integration by using "brute force" Riemann sums, we apply a more involved quadrature formula \eqref{eq:quadrature}, derived from Taylor approximation, which in turn allows us to keep the step number under good control. Here instead of the most famous Simpson rule, which uses only function values, we prefer a somewhat more involved quadrature, calculating the approximate value of the integral by means of using also the values of the second derivative of the integrand. The gain is considerable even if not in order, but in the constant of the error formula.

Second, as already suggested in the conclusion of \cite{Krenci}, we apply Taylor series expansion at more points than just at the midpoint $t_0:=k+1/2$ of the $t$-interval $(k,k+1)$. This reduces the size of powers of $(t-t_0)$, from powers of $1/2$ to powers of smaller radii. The Taylor polynomial of degree 7, considered in \cite{Krenci}, had error size $2^{-8}$ due to the contribution of $|\xi-t_0|^8$ in the Lagrange remainder term, while here for $k=4$ the division of the $t$-interval to $(4,4.5)$ and $(4.5,5)$ results in $O(4^{-n})$ in the respective error contribution.

\section{Notations and a few general formula for the numerical analysis}\label{sec:formulae}

Let $k\in \NN$ be fixed. (Actually we will work with $k=3$ or $k=4$ only.) To set the framework, here we briefly sketch the general scheme of our argument, and exhibit a number of general formulae for later use in the analysis.

In the sequel we write $F_{\pm}(x):=1+ e(x)\pm e((k+2)x)$ and consider the $p^{\rm th}$ power integrals $f_{\pm}(p):=\int_0^1 |F_{\pm}(x)|^p dx$ as well as their difference
$$
\Delta(p):=f_{-}(p)-f_{+}(p):=\int_0^1 |F_{-}(x)|^p-\int_0^1 |F_{+}(x)|^p dx.
$$
Our goal is to prove Conjecture \ref{conj:con3}, that is $\Delta(p)>0$ for all $p\in(2k,2k+2)$.

Let us introduce a few further notations. We will write $t:=p/2\in[k,k+1]$ and put
\begin{align}\label{eq:Gpmdef}
G_{\pm}(x)&:=|F_{\pm}(x)|^2,\qquad
g_{\pm}(t):=\frac12 f_{\pm}(2t)= \int_0^{1/2} G_{\pm}^t(x) dx,\qquad \\ \label{eq:ddef} d(t)&:=\frac 12 \Delta(2t)=g_{-}(t)-g_+(t)=\int_0^{1/2} \left[ G_{-}^t(x)-G_{+}^t(x)\right] dx.
\end{align}
So we are to prove that $d(t)>0$ for $k<t<k+1$. First we derive that at the endpoints $d$ vanishes; and, for later use, we also compute some higher order integrals of $G_{\pm}$.

\begin{lemma}\label{l:GParseval} Let $\rho\in \NN$ with $1 \leq \rho \leq k+1$. Then we have
\begin{equation}\label{eq:Gpmadelldeveloped}
G_{\pm}^{\rho}=|F_{\pm}^{\rho}|^2 =\left| \sum_{\nu=0}^{\rho\cdot (k+2)} a_{\pm}(\nu) e_{\nu} \right|^2
\end{equation}
with
\begin{equation}\label{eq:anudef}
a_{\pm}(\nu):=(\pm 1)^\mu \binom{\rho}{\mu}\binom{\rho-\mu}{\lambda},
\end{equation}
where $\mu:=\left[\dfrac{\nu}{k+2}\right]$ and $\lambda:=\nu-\mu(k+2)$ is the reduced residue of $\nu \mod k+2$. Therefore,
\begin{equation}\label{eq:Gtorhointegral}
\int_0^{1/2} |G_{\pm}|^{\rho} =\frac12 \sum_{\nu=0}^{\rho\cdot(k+2)} \left| a_{\pm}(\nu) \right|^2. \end{equation}
In particular, $\int_0^{1/2} |G_{+}|^{\rho}=\int_0^{1/2} |G_{-}|^{\rho}$ for all $0 \leq \rho\leq k+1$ and thus $d(k)=d(k+1)=0$.
\end{lemma}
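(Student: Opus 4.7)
The plan is to expand $F_\pm^\rho$ via the binomial theorem applied twice, then invoke Parseval together with the symmetry $G_\pm(x) = G_\pm(1-x)$ to pass from integrals on $[0,1]$ to integrals on $[0,1/2]$.

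First I would write $F_\pm = (1+e_1) \pm e_{k+2}$ and apply the binomial theorem, obtaining
\begin{equation*}
F_\pm^\rho = \sum_{\mu=0}^{\rho} (\pm 1)^\mu \binom{\rho}{\mu}\, e_{\mu(k+2)}\,(1+e_1)^{\rho-\mu};
\end{equation*}
expanding $(1+e_1)^{\rho-\mu}$ by the binomial theorem as well yields a double sum indexed by $(\mu,\lambda)$ with $0\leq\mu\leq\rho$ and $0\leq\lambda\leq\rho-\mu$, contributing the character $e_{\mu(k+2)+\lambda}$ with coefficient $(\pm 1)^\mu\binom{\rho}{\mu}\binom{\rho-\mu}{\lambda}$.

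The key step, and the only place where the hypothesis $\rho\leq k+1$ is really used, is the observation that distinct pairs $(\mu,\lambda)$ produce distinct frequencies $\nu := \mu(k+2)+\lambda$. Indeed $\lambda \leq \rho-\mu \leq k+1-\mu < k+2$, so $\mu$ and $\lambda$ are recovered from $\nu$ as the quotient and remainder on division by $k+2$. Hence no cancellation of frequencies occurs, and \eqref{eq:Gpmadelldeveloped}--\eqref{eq:anudef} follow at once.

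For the integral formula, using $\overline{F_\pm(x)} = F_\pm(-x)$ I would observe that $G_\pm$, and hence $G_\pm^\rho$, is symmetric about $1/2$ on $[0,1]$; combined with Parseval applied to the trigonometric polynomial $F_\pm^\rho$, this gives $\int_0^{1/2} G_\pm^\rho = \tfrac{1}{2}\int_0^1 |F_\pm^\rho|^2\,dx = \tfrac{1}{2}\sum_\nu |a_\pm(\nu)|^2$, i.e.\ \eqref{eq:Gtorhointegral}. The last claim is then immediate: $|a_\pm(\nu)|^2 = \binom{\rho}{\mu}^2\binom{\rho-\mu}{\lambda}^2$ carries no trace of the sign, so $\int_0^{1/2} G_+^\rho = \int_0^{1/2} G_-^\rho$; specializing to $\rho=k$ and $\rho=k+1$ (both within the permitted range) yields $d(k)=d(k+1)=0$.

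I do not anticipate any real obstacle: the argument is bookkeeping on binomial expansions plus one use of Parseval. The conceptual point worth flagging is that the bound $\rho\leq k+1$ is sharp for this method — for $\rho\geq k+2$ different pairs $(\mu,\lambda)$ would collide into the same frequency, signed contributions would interfere, and the resulting coefficients $|a_\pm(\nu)|^2$ would in general depend on the sign, so the cancellation at the endpoints of $[k,k+1]$ is a genuinely low-degree phenomenon.
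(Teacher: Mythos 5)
Your proof is correct and follows essentially the same route as the paper: the paper expands $(1+e_1\pm e_{k+2})^\rho$ directly as a trinomial, whereas you group it as $((1+e_1)\pm e_{k+2})^\rho$ and apply the binomial theorem twice, but the resulting terms $(\pm1)^\mu\binom{\rho}{\mu}\binom{\rho-\mu}{\lambda}e_{\mu(k+2)+\lambda}$ are identical, and both proofs hinge on the same observation that $\rho\le k+1$ forces $\lambda<k+2$, so $(\mu,\lambda)\mapsto\nu$ is injective and no frequencies collide. The one place you are slightly more careful than the paper is in making explicit the symmetry $G_\pm(1-x)=G_\pm(x)$ (via $\overline{F_\pm(x)}=F_\pm(-x)$) that converts $\int_0^{1/2}$ into $\tfrac12\int_0^1$ before applying Parseval; the paper compresses this into ``follows by Parseval's formula.''
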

\begin{remark}\label{r:whatifrholarge} By similar calculations one can compute $a_{\pm}(\nu)$ even for higher values of $\rho$ as well. E.g. in the range $k+2\leq \rho \le 2k+3$ we have $a_{\pm}(\nu)=(\pm 1)^\mu \left\{ \binom{\rho}{\mu}\binom{\rho-\mu}{\lambda} \pm \binom{\rho}{\mu-1} \binom{\rho-\mu+1}{\lambda+(k+2)} \right\}$. That we will not use, however.
\end{remark}
\begin{proof} In the trinomial development of $(1+e_1\pm e_{k+2})^{\rho}$ the general term coming from choosing $\sigma$ times $\pm e_{k+2}$ and $\tau$ times $e_1$ (and then necessarily $\rho-\sigma-\tau$ times the constant term 1) has the form $(\pm 1)^\sigma \binom{\rho}{\sigma}\binom{\rho-\sigma}{\tau}$. This to contribute to $a_{\pm}(\nu)$ we must have $\nu=\sigma(k+2)+\tau$, a condition which forces $\nu\equiv \tau \mod k+2$. Now if $\rho \leq k+1$, we also have $0\leq \tau \leq k+1$, so the number $\tau$ of choosing $e_1$s is exactly $\lambda$, the $\mod k+2$ reduced residue of $\nu$, and consequently $\sigma=(\nu-\lambda)/(k+2)=\mu$. That results in formula \eqref{eq:anudef} for $a_{\pm}(\nu)$, while \eqref{eq:Gtorhointegral} follows by Parseval's formula. Whence the assertion is proved.
\end{proof}

To start the analysis of $G(x):=G_{\pm}(x)$, let us compute its $x$-derivatives. We find
\begin{align}\label{eq:GpmdefFirst}
G_{\pm}(x)&=3+2\{\cos(2\pi x)\pm \cos((2k+2)\pi x)\pm \cos((2k+4)\pi x)\},\notag\\
G_{\pm}^{'}(x)&=-4\pi \sin(2\pi x)\mp (4k+4) \pi \sin((2k+2)\pi x)\mp (4k+8) \pi \sin((2k+4)\pi x)),\\
G_{\pm}^{''}(x)&=-8\pi ^2 \cos(2\pi x)\mp 8(k+1)^2\pi^2\cos((2k+2)\pi x)\mp 8(k+2)^2\pi^2\cos((2k+4)\pi x)\},\notag
\end{align}
and in general
\begin{align*}
G_{\pm}^{(2m+1)}(x)&=(-4)^{m+1}\pi^{2m+1} \notag \\ & \cdot \left\{ \sin(2\pi x)\pm (k+1)^{2m+1}\sin((2k+2)\pi x) \pm (k+2)^{2m+1} \sin((2k+4)\pi x) \right\}, \notag\\
G_{\pm}^{(2m)}(x)&=2(-4)^{m}\pi^{2m} \notag \\ & \cdot \left\{ \cos(2\pi x)\pm (k+1)^{2m}\cos((2k+2)\pi x)\pm (k+2)^{2m}\cos((2k+4)\pi x)\right\}.\notag
\end{align*}
Consequently we have
\begin{align}\label{eq:Gpmadmnorm}
\|G_{\pm}\|_\infty & \leq 9=:M_0, \\ \notag \| G_{\pm}^{(m)}\|_\infty & \leq 2^{m+1} \pi^{m} \{1+(k+1)^{m}+ (k+2)^{m}\}=:M_m(k)=:M_m \qquad (m=1,2,\dots).
\end{align}

We encounter a new phenomenon, compared to \cite{Krenci}, when $k=3$, since here $G_{+}(x)$ does not have a positive lower bound: we in fact have $G_{+}(1/3)=0$. (Let us note in passing that for $G_{-}$ we have $\min_{\TT} G_{-} \approx 0.282...>1/4 $ -- but we do not use this in the following.)

For higher $x$-derivatives of the composite functions $G_{+}^t \log^j G_{+}$, needed in our analysis, vanishing of $G_{+}$ causes concerns for occurring negative powers of $G_{+}$ after differentiation, while the appearance of $\log^j G_{+}$ invoke concerns of blowing up calculations and estimates in view of "$\log 0=\infty$". The first problem we resolve by a comparison of $G_{+}$ to $G'^2_{+}$, always present in the numerator, while the second difficulty will be taken care of by using only continuous functions $v^a\log^b v$, with $a>0, b\geq 0$, of $v=G_{+}(x)$. Although all this can be avoided, when $G_{-}$ is strictly bounded away from zero, for a possibly better estimation we still calculate the same comparative estimates even for $G_{-}$. (Similarly, the idea of comparison of $G_{\pm}'^2$ and $G_{\pm}$ could be used for higher $k$ as well, whether or not the functions $G_{\pm}$ vanish.)

So we want to compare $G'$ and $\sqrt{G}=|F|$, more precisely $G'^2$ and $G$. Note that $G'= 2 |F| \cdot |F|'= 2 \sqrt{G}\cdot \left(\sqrt{G}\right)'$. Another heuristical reasoning to justify the search for a bound of $G'^2/G$, is that $G\geq 0$, hence whenever $G=0$ we necessarily have $G'=0$, and the multiplicity $m$ of any zero of $G$ being an integer (as $G$ is an entire function), we conclude $m\geq 2$: so $G'^2$ has a zero of order $2(m-1)\geq m$.

So we start the search for a bound on $G'^2_{\pm}/G_{\pm}$. To this end we write $u=\cos v$ with $v=2\pi x$ and calculate
\begin{align}\label{eq:GpmprimesquareasChebyshev}\notag
G'^2_{\pm}(x)&=(4\pi)^2 (\sin v\pm (k+1) \sin (k+1)v \pm (k+2) \sin (k+2)v)^2
\notag \\&= 16 \pi^2 (1-u^2) \left[1 \pm (k+1)U_{k}(u) \pm (k+2)U_{k+1}(u)\right]^2,
\end{align}
where $U_m(u):=\dfrac{\sin((m+1)v)}{\sin v}$ ($v:=\arccos u$) is the $m$-th Chebyshev polynomial of the second kind.

We are to compare this and
\begin{equation}\label{eq:GpmasChebyshev}\notag
G_{\pm}(x)=3+2\cos v \pm2 \cos (k+1)v \pm 2 \cos (k+2) v = 3+2u \pm 2 T_{k+1}(u) \pm 2T_{k+2}(u),
\end{equation}
where here $T_m(u)=\cos(mv)$ ($v:=\arccos u$) is the $m$th Chebyshev polynomial of the first kind.

In all, $G'^2/G$ is always an entire function of $x$, and substituting $u=\cos v= \cos  2\pi x$ we have the formula
\begin{equation}\label{eq:GprimesquareperG}
\frac{G'^2(x)}{G(x)}=\frac{16 \pi^2 (1-u^2) \left[1 \pm (k+1)U_{k}(u) \pm (k+2)U_{k+1}(u)\right]^2}{3+2u \pm 2 T_{k+1}(u) \pm 2T_{k+2}(u)}.
\end{equation}

\bigskip

In the paper \cite{Krenci} we used Riemann sums and the standard Riemann sums approximation formula
$\left| \int_0^{1/2} \Phi(\alpha) d\alpha - \frac{1}{2N}\sum_{n=1}^{N}
\Phi\left(\frac{n-1/2}{2N}\right)\right| \leq \frac{\|\Phi''\|_{\infty}}{192N^2}$,
when numerically integrating functions of the form $\Phi:=H:=G^t\log^j G$ along the $x$ values.

A new feature of the present approach is that for better approximation we now improve the numerical integration method by means of invoking a quadrature formula. This was not feasible for small $t$, as higher derivatives of the composite function $H$ lead to $G$ in the denominator: the $m^{\rm{th}}$ derivative in general results in the occurrence of $G^{t-m}$, and negative powers of $G$ bear the risk of blowing up all of our estimates. This can be remedied a little by comparison of $G'^2$ to $G$, a lucky possibility explained above. This was already utilized in \cite{Krenci} to control $2^{\rm{nd}}$ derivatives of $H$, and we'll make use of it here, too for $k=3$, when for some integrals (some occurring $H$ functions) $t$ can be as small as $t=3$, while we need to control $4^{\rm{th}}$ derivatives of $H$ in view of error terms of the quadrature formula we use. With this additional consideration the $4^{\rm{th}}$ derivatives of $H$ can always be controlled for $k\geq 3$. (For $k=0,1,2$, settled in \cite{Krenci}, this could not have been possible.)

For remaining self-contained, we deduce here the otherwise well-known quadrature formula what we want to apply. This starts with the $3^{\rm{rd}}$ order Taylor polynomial approximation (with the so-called Lagrange error term), valid for four times \hbox{continuously differentiable functions $\f$:}
$$
\f(x)=\f(x_0)+\f'(x_0)(x-x_0)+\dfrac{\f''(x_0)}{2} (x-x_0)^2 + \dfrac{\f^{'''}(x_0)}{6} (x-x_0)^3 +\dfrac{\f^{IV}(\xi_{x,x_0})}{24} (x-x_0)^4.
$$
Integrating over a symmetric interval $[x_0-q,x_0+q]$ leads to
\begin{align*}
\left| \int_{x_0-q}^{x_0+q} \f(x)dx - \left\{ \f(x_0) 2q + \f''(x_0)\dfrac{q^3}{3} \right\} \right| & = \left| \int_{x_0-q}^{x_0+q} \frac{\f^{IV}(\xi_{x,x_0})}{24} (x-x_0)^4dx \right| \\ \leq  \max_{[x_0-q,x_0+q]} |\f^{IV}(x)| & \int_{x_0-q}^{x_0+q} \frac{(x-x_0)^4}{24} dx  \leq \max_{[x_0-q,x_0+q]} |\f^{IV}(x)| \, \frac{q^5}{60}.
\end{align*}
Applying the same formula for $N$ intervals of the form $[x_n-h/2,x_n+h/2]$, where $h=(b-a)/N$ and $x_n=(n-1/2)h+a$ with $n=1,\dots,N$, we obtain
\begin{equation}\label{eq:semiquadrature}
\left| \int_a^b \f - \sum_{n=1}^N \left\{ \f(x_n) h + \f''(x_n)\dfrac{h^3}{24} \right\} \right| \leq \frac{h^5}{60 ~ 2^5} \sum_{n=1}^N \max_{|x-x_n|\leq \frac{h}{2}} |\f^{IV}(x)| \leq \frac{N  h^5}{60~ 2^5} \|\f^{IV}\|_{\infty}.
\end{equation}
This leads to the following quadrature formula.\footnote{Note the noticeably better error estimate, not in order but in constant, than one would obtain by more customary Simpson type rules. This is due to the use of second derivatives, which in our case will still be calculable, explicit formulae.}
\begin{lemma}\label{l:quadrature} Let $\f$ be a four times continuously differentiable function on $[0,1/2]$. Then we have
\begin{equation}\label{eq:quadrature}
\left| \int_0^{1/2} \f (x)dx - \sum_{n=1}^N \left\{ \f\left(\frac{2n-1}{4N}\right) \frac{1}{2N} + \f'' \left(\frac{2n-1}{4N}\right) \frac{1}{192N^3} \right\} \right| \leq \frac{\|\f^{IV}\|_{\infty}}{60~2^{10} N^4} .
\end{equation}
\end{lemma}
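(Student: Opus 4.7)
The plan is to specialize the general formula \eqref{eq:semiquadrature}, already derived in the paragraph preceding the statement from the fourth-order Taylor polynomial with Lagrange remainder and the symmetry-driven cancellation of the odd-order terms on a balanced interval $[x_0-q,x_0+q]$. Concretely, I would take $a=0$, $b=1/2$, so that $h=(b-a)/N=1/(2N)$, and the midpoints become $x_n=(n-1/2)h=(2n-1)/(4N)$ for $n=1,\dots,N$. Substituting these values into \eqref{eq:semiquadrature} gives $\f(x_n)h=\f\!\left(\frac{2n-1}{4N}\right)\frac{1}{2N}$ and $\f''(x_n)\frac{h^3}{24}=\f''\!\left(\frac{2n-1}{4N}\right)\frac{1}{192N^3}$, which are exactly the two summand expressions in \eqref{eq:quadrature}.

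For the error bound, the right-hand side of \eqref{eq:semiquadrature} evaluates to
$$
\frac{Nh^5}{60\cdot 2^5}\,\|\f^{IV}\|_\infty \;=\; \frac{N}{60\cdot 2^5\cdot (2N)^5}\,\|\f^{IV}\|_\infty \;=\; \frac{\|\f^{IV}\|_\infty}{60\cdot 2^{10}N^4},
$$
matching the stated constant. The underlying reason this bound beats a pure Simpson-type estimate by a factor in the constant is the combined use of the midpoint as the Taylor expansion center together with the explicit $\f''$ correction term: symmetry of the interval kills the $\f'(x_0)(x-x_0)$ and $\f'''(x_0)(x-x_0)^3/6$ contributions, so only the $q^4$ Lagrange remainder survives.

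There is no real obstacle to overcome; the only points meriting a line of commentary are (i) the hypothesis $\f\in C^4[0,1/2]$ is exactly what is needed for the Lagrange remainder form on each subinterval $[x_n-h/2,x_n+h/2]$, and (ii) the supremum $\|\f^{IV}\|_\infty$ taken over all of $[0,1/2]$ legitimately dominates the local maxima $\max_{|x-x_n|\le h/2}|\f^{IV}(x)|$ used in \eqref{eq:semiquadrature}, so after summing $N$ such bounds the resulting $N\cdot h^5/(60\cdot 2^5)$ factor produces the displayed error estimate.
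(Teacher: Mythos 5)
Your proposal is exactly the paper's route: the lemma is stated immediately after \eqref{eq:semiquadrature} as its specialization to $a=0$, $b=1/2$, $h=1/(2N)$, and your substitutions and arithmetic (including $Nh^5/(60\cdot 2^5)=1/(60\cdot 2^{10}N^4)$) reproduce the intended derivation faithfully. No gap.
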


Let us start analyzing the functions
\begin{equation}\label{eq:Hdef}
H(x):=H_{t,j,\pm}(x):=G_{\pm}^t(x)\log^j G_{\pm}(x)\qquad (x\in [0,1/2]) \qquad (t\in [k,k+1],\, j\in \NN).
\end{equation}
To find the maximum norm of $H_{t,j,\pm}$, we in fact look for the maximum of an expression of the form $v^t |\log v|^j$, where $v=G(x)$ ranges from zero (or, if $G \ne 0$, from some positive lower bound) up to $\|G\|_\infty\leq 9$. For that, a direct calculus provides the following.

\begin{lemma}\label{l:lmvsestimate} For any $s>0$ and $m\in\NN$ the function $\alpha(v):=\alpha_{s,m}(v):= v^s| \log v|^m$ behaves on $[0,\infty)$ the following way. It is nonnegative, continuous, continuously differentiable, (apart from possibly $0$ in case $s\leq 1$), has precisely two zeroes at $0$ and $1$, and it has one single critical point $v_0=\exp(-m/s)$.
Consequently, it has exactly one local maximum point at $v_0$ where its local maximum is $\left(\frac{m}{e s}\right)^m$, furthermore, the function increases in $[0,v_0]$ and also on $[1,\infty)$, and decreases on $[v_0,1]$. Therefore for any finite interval $[a,b]\subset [0,\infty)$ we have
\begin{equation}\label{eq:alphamax}
\max_{[a,b]} \alpha(v) = \begin{cases}
\alpha(b) \qquad \qquad \qquad &\textrm{if}~ a< b\leq v_0, \\
\alpha(v_0) \qquad \qquad \qquad &\textrm{if}~ a \leq v_0 < b \leq 1,\\
\max \{\alpha(v_0),\alpha(b)\} \qquad \qquad \qquad &\textrm{if}~ a \leq v_0, ~ 1 < b ,\\
\alpha(a) \qquad \qquad \qquad &\textrm{if}~ v_0 <a<b\leq 1,\\
\max \{ \alpha(a),\alpha(b)\} \qquad \qquad \qquad &\textrm{if}~ v_0 < a < 1 < b, \\
\alpha(b) \qquad \qquad \qquad &\textrm{if}~ 1 \leq a < b.
\end{cases}
\end{equation}
In particular for $[a,b]=[0,9]$ we always have
\begin{equation}\label{eq:powerlogmax}
\al_{s,m}^{*}:=\max\limits_{[0,9]} \alpha(v) = \max\left\{\left(\frac{m}{e s}\right)^m,9^s\log^m 9 \right\}=
\begin{cases}
\left(\frac{m}{e s}\right)^m \qquad & \textrm{if} ~ m/s > \frac1{\sigma_0}, \\
9^s\log^m 9  & \textrm{if} ~ m/s \leq \frac1{\sigma_0},
\end{cases}
\end{equation}
where $\sigma_0\approx 0.126... $ is the unique root of the equation $\sigma 9^{\sigma} = 1/(e\log 9)$.
\end{lemma}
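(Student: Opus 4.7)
The plan is to carry out a direct single-variable calculus analysis of $\alpha = \alpha_{s,m}(v) = v^s|\log v|^m$: establish basic regularity and zeros, factor the derivative to pin down the unique critical point $v_0 := \exp(-m/s)$, read off the monotonicity pattern, and finally case-split the maximum formula \eqref{eq:alphamax} according to where $[a,b]$ sits relative to the landmarks $v_0$ and $1$.

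First I would dispose of the elementary properties. Nonnegativity is obvious. Continuity on $(0,\infty)$ is clear; at $v = 0$ one invokes the standard limit $v^s|\log v|^m \to 0$ as $v \to 0^+$, valid for every $s > 0$, so $\alpha$ extends continuously. The only zeros are at $v=0$ and $v=1$. On $(0,1)\cup(1,\infty)$ the function is smooth. The remaining regularity caveat concerns $v=0$: the asymptotic $\alpha'(v) \sim s\, v^{s-1}|\log v|^m$ as $v\to 0^+$ shows that continuous differentiability at $0$ holds precisely when $s > 1$, which is exactly the exception stated in the lemma.

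Second, I would compute and factor the derivative. On $(0,1)$, differentiating $v^s(-\log v)^m$ yields
\[
\alpha'(v) = v^{s-1}(-\log v)^{m-1}\bigl[-s\log v - m\bigr],
\]
whose only zero in $(0,1)$ is $v_0 = \exp(-m/s)$, where $\alpha(v_0) = (m/(es))^m$. On $(1,\infty)$ the analogous identity $\alpha'(v) = v^{s-1}(\log v)^{m-1}[s\log v + m]$ stays strictly positive, so there are no further critical points. Sign analysis of the bracket $-s\log v - m$ on $(0,1)$ then gives the monotonicity picture claimed in the lemma: $\alpha$ strictly increases on $[0,v_0]$, strictly decreases on $[v_0,1]$, and strictly increases on $[1,\infty)$.

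Third, with this shape in hand, the six-case description of $\max_{[a,b]}\alpha$ is routine bookkeeping: depending on which of $v_0$ and $1$ lie inside $[a,b]$ and on which side of them $a,b$ fall, the maximum is either the interior local max $\alpha(v_0)$, a single endpoint value, or the larger of the two endpoint values. Specializing to $[0,9]$, the only competitors are $\alpha(v_0)$ and $\alpha(9) = 9^s\log^m 9$, since $0 < v_0 < 1 < 9$ and $\alpha$ vanishes at $0$ and $1$. The only step requiring a small amount of care is the threshold identification: rewriting $(m/(es))^m \geq 9^s\log^m 9$ in terms of $\tau := s/m$ turns the inequality into $\tau\cdot 9^{\tau} \leq 1/(e\log 9)$, and since $\sigma \mapsto \sigma\cdot 9^\sigma$ is strictly increasing from $0$ to $\infty$ on $(0,\infty)$, there is a unique $\sigma_0$ with $\sigma_0\cdot 9^{\sigma_0} = 1/(e\log 9)$; the spike value $(m/(es))^m$ dominates exactly when $s/m \leq \sigma_0$, i.e.\ $m/s \geq 1/\sigma_0$. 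This threshold manipulation is the only mildly nontrivial part; everything else reduces to standard calculus.
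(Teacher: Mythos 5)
The paper does not actually supply a proof of this lemma; it is introduced by the remark that ``a direct calculus provides the following'' and left as an exercise. Your argument is exactly that direct calculus made explicit, and it is correct: the factorization of $\alpha'$ on each of $(0,1)$ and $(1,\infty)$, the identification of $v_0=\exp(-m/s)$, the monotonicity pattern, the case-split for $\max_{[a,b]}\alpha$, and the reduction of the $[0,9]$ threshold to the monotone equation $\sigma\,9^\sigma=1/(e\log 9)$ via the substitution $\tau=s/m$ are all sound. One tiny caveat worth a sentence, should you want full fidelity to the lemma's wording: for $m\ge 2$ the derivative also vanishes at $v=1$ (a local minimum with $\alpha(1)=0$), and for $m=1$ the function fails to be differentiable at $v=1$, so the clauses ``one single critical point'' and ``$C^1$ apart from possibly $0$'' are slightly overstated in the lemma itself; neither affects the monotonicity picture or the maximum formula, which is what gets used. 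Your handling of the threshold, noting that the boundary case $m/s=1/\sigma_0$ gives equality of the two candidate maxima, also correctly reconciles your $\ge$ with the lemma's $>$.
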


For the application of the above quadrature \eqref{eq:quadrature} we calculate (c.f. also \cite{Krenci})
\begin{align}\label{eq:Hdoubleprimegeneral}
H''(x)&:=H''_{t,j,\pm}(x) = G''(x) G^{t-1}(x) \log^{j-1} G(x) \left\{t \log G(x)+j \right\} \notag \\ & ~~+ G'^2(x) G^{t-2}(x) \log^{j-2} G(x) \left\{ t(t-1) \log^2 G(x)  + j(2t-1) \log G(x) + j(j-1) \right\}.
\end{align}
However, the error estimation in the above explained quadrature approach forces us to consider even fourth $x$-derivatives of $H=H_{t,j,\pm}$. In order to calculate
\begin{equation}\label{eq:H4binom}
H^{IV}= \sum_{m=0}^4 \binom{4}{m} (G^t)^{(m)} (\log^j G)^{(4-m)},
\end{equation}
we start with computing
\begin{eqnarray}\label{eq:Gtprime}
(G^{t})'&=&tG^{t-1}G'\notag \\
(G^{t})''  &=&t(t-1)G^{t-2}G'^2+tG^{t-1}G''\notag \\
(G^{t})''' &=&t(t-1)(t-2)G^{t-3}G'^3+3t(t-1)G^{t-2}G'G''+tG^{t-1}G'''\\
(G^{t})^{IV} &=&t(t-1)(t-2)(t-3)G^{t-4}G'^4+6t(t-1)(t-2)G^{t-3}G'^2G'' \notag \\ & +&  3t(t-1)G^{t-2}G''^2+4t(t-1)G^{t-2}G'G'''+tG^{t-1}G^{IV} \notag
\end{eqnarray}
and denoting $L:=\log G$ also
\begin{align}\label{eq:Ljprime}
(L^j)'&=\frac{jL^{j-1}G'}{G} \notag\\
(L^j)''&= \frac{G'^2}{G^2}j[(j-1)L^{j-2}-L^{j-1}]+\frac{G''}{G}jL^{j-1}\notag
\\
(L^j)'''&= \frac{G'^3}{G^3}j\big[(j-1)(j-2)L^{j-3}-3(j-1)L^{j-2} +2L^{j-1} \big] \notag \notag
\\ &\qquad + \frac{G'G''}{G^2}3j[(j-1)L^{j-2}-L^{j-1}]+ \frac{G'''}{G}jL^{j-1},
\\
(L^j)^{IV}&=\frac{G'^4}{G^4} j\left[[(j-1)(j-2)(j-3)L^{j-4} -6(j-1)(j-2)L^{j-3}+11(j-1)L^{j-2}-6L^{j-1}\right] \notag
\\ &\qquad + \frac{G'^2G''}{G^3}6j\left[(j-1)(j-2)L^{j-3} -3(j-1)L^{j-2} +2L^{j-1}\right] \notag
\\ &\qquad  +\frac{G'G'''}{G^2} 4j\left[(j-1)L^{j-2}-L^{j-1}\right] +\frac{G^{IV}}{G}jL^{j-1} + \frac{G''^2}{G^2}3j\left[(j-1)L^{j-2}-L^{j-1}\right]. \notag
\end{align}
Inserting \eqref{eq:Gtprime} and \eqref{eq:Ljprime} into \eqref{eq:H4binom} we arrive at the desired general formula for $H^{IV}_{t,j,\pm}$ as follows
\begin{align}\label{eq:HIVgeneralformula}
H^{IV}&=G^{t-4}G'^4\Big\{ j(j-1)(j-2)(j-3)L^{j-4} + [4t-6]j(j-1)(j-2)L^{j-3}
\notag \\ & \qquad +[6t^2-18t+11]j(j-1)L^{j-2} +[2t^3-9t^2+11t-3]2jL^{j-1}
+ t(t-1)(t-2)(t-3) L^j\Big\}
\notag \\& + 6\cdot G^{t-3}G'^2G''\Big\{ j(j-1)(j-2)L^{j-3}+3(t-1)j(j-1)L^{j-2}
+[3t^2-6t+2)]j L^{j-1}
\notag \\&\qquad +t(t-1)(t-2)L^j \Big\}
+ 4\cdot G^{t-2}G'G'''\Big\{ j(j-1)L^{j-2}+(2t-1)jL^{j-1}+t(t-1)L^j\Big\}
 \\&  + G^{t-1}G^{IV}\left\{jL^{j-1}+tL^j \right\}
+ 3\cdot G^{t-2}G''^2\Big\{j(j-1)L^{j-2}+(2t-1)jL^{j-1}+t(t-1)L^j \Big\} \notag .
\end{align}

At all occurrences we will need an estimate for $\|H^{IV}\|_\infty$ in order to apply it in the numerical quadrature formula. Therefore, we now start estimating the above expression. For a shorter notation we write $v:=G(x)\in [0,9]$ and $\ell:=|L|=|\log v|$. As a first step we thus find for $j=1,2,3,..., t\geq3$ the estimates
\begin{align}\label{eq:HIVgeneralestimateLARGE}
|H^{IV}(x)|&\leq v^{t-4}M_1^4\Big\{ j(j-1)(j-2)(j-3)\ell^{j-4} + [4t-6]j(j-1)(j-2)\ell^{j-3}
\notag \\ & \qquad +[6t^2-18t+11]j(j-1)\ell^{j-2} +[2t^3-9t^2+11t-3]2j\ell^{j-1}
+ t(t-1)(t-2)(t-3) \ell^j\Big\}
\notag \\& + 6\cdot v^{t-3}M_1^2 M_2 \Big\{ j(j-1)(j-2)\ell^{j-3}+3(t-1)j(j-1)\ell^{j-2}
+[3t^2-6t+2]j \ell^{j-1}
\notag \\&\qquad +t(t-1)(t-2)\ell^j \Big\}
+ v^{t-1} M_4 \left\{j\ell^{j-1}+t\ell^j \right\}
\\& + v^{t-2} (3M_2^2+ 4M_1 M_3) \Big\{j(j-1)\ell^{j-2}+(2t-1)j\ell^{j-1}+t(t-1)\ell^j \Big\} \notag .
\end{align}
Furthermore, to be used typically for smaller values of $v=G(x)$, that is to say only for $0\leq v \leq 3$, we can derive a different estimation whenever some constant $M^*:=M^*(k)$ is known satisfying $\|G'^2/G\|_\infty \leq M^*$. Namely, we then have
\begin{align}\label{eq:HIVgeneralestimateSMALL}
|H^{IV}(x)|&\leq v^{t-2}{M^{*}}^2\Big\{ j(j-1)(j-2)(j-3)\ell^{j-4} + [4t-6]j(j-1)(j-2)\ell^{j-3}
\notag \\ & \qquad +[6t^2-18t+11]j(j-1)\ell^{j-2} +[2t^3-9t^2+11t-3]2j\ell^{j-1}
+ t(t-1)(t-2)(t-3) \ell^j\Big\}
\notag \\& + 6\cdot v^{t-2}M^{*} M_2 \Big\{ j(j-1)(j-2)\ell^{j-3}+3(t-1)j(j-1)\ell^{j-2}
+[3t^2-6t+2)]j \ell^{j-1}
\notag \\&\qquad +t(t-1)(t-2)\ell^j \Big\}
+ 4\cdot v^{t-1.5}\sqrt{M^{*}} M_3 \Big\{ j(j-1)\ell^{j-2}+(2t-1)j\ell^{j-1}+t(t-1)\ell^j\Big\}
\notag \\&  + v^{t-1}M_4 \left\{j\ell^{j-1}+t\ell^j \right\}
+ 3\cdot v^{t-2}M_2^2\Big\{j(j-1)\ell^{j-2}+(2t-1)j\ell^{j-1}+t(t-1)\ell^j \Big\} .
\end{align}
Furthermore, estimating by means of $\Lambda:=\max(\ell,1)$ and using $\ell^{j-1}, \ell^{j-2}, \ell^{j-3}, \ell^{j-4} \leq \Lambda^j$ and also $2\sqrt{v}\leq 1+v$ we are led to
\begin{align}\label{eq:HIVgeneralSMALLfurther}
|H^{IV}(x)|&\leq v^{t-2}\La^j \Bigg( {M^{*}}^2\Big\{ j(j-1)(j-2)(j-3) + [4t-6]j(j-1)(j-2)
\notag \\ & \qquad \qquad \qquad +[6t^2-18t+11]j(j-1) +[2t^3-9t^2+11t-3]2j + t(t-1)(t-2)(t-3) \Big\}
\notag\\& \qquad + 6\cdot M^{*} M_2 \Big\{ j(j-1)(j-2)+3(t-1)j(j-1)+[3t^2-6t+2)]j +t(t-1)(t-2) \Big\}
\notag \\&\qquad + 2 \sqrt{M^{*}} M_3 \Big\{ j(j-1)+(2t-1)j+t(t-1)\Big\} + 3 M_2^2\Big\{j(j-1)+(2t-1)j+t(t-1)\Big\} \Bigg)
\notag \\&  + v^{t-1}\La^j \Bigg( 2\cdot \sqrt{M^{*}} M_3 \Big\{ j(j-1)+(2t-1)j+t(t-1)\Big\}+ M_4 \left\{j+t\right\} \Bigg).
\end{align}

\section{The proof of the $k=3$ case of Conjecture \ref{conj:con3}}\label{sec:k=3}

When $k=3$, let us start with a few concrete numerical estimates of the functions $G^{(m)}_{\pm}$ and $H^{(m)}_{\pm}$. For $k=3$ we need $U_3(u)=4u(2u^2-1)$, $U_4(u)=16u^4-12u^2+1$ and $T_4(u)=8u^4-8u^2+1$, $T_5(u)=16u^5-20u^3+5u$.
Writing these in \eqref{eq:GprimesquareperG} yields for $G_{+}$
\begin{align}\label{eq:GplusprimesquareperG}
\frac{G'^2_{+}}{G_{+}}(x)&=\frac{16 \pi^2 (1-u^2) \left[80u^4+32u^3-60u^2-16u+6\right]^2}{32u^5+16u^4-40u^3-16u^2+12u+5}\notag \\& =
\frac{16 \pi^2 (1-u^2)[40u^3-4u^2-28u+6]^2}{8u^3-4u^2-8u+5},
\end{align}
canceling the common factors of $(2u+1)^2$. Note that the denominator is now non-vanishing in the interval $[-1,1]$, as its minimum is $\approx 0.12$, attained at $\dfrac{1+\sqrt{13}}{6}\approx 0.76759...$. Thus the above rational function can be maximized numerically on the range $u\in[-1,1]$ of $u=\cos(2\pi x)$, the maximum being $\approx 3699$, so
\begin{equation}\label{eq:GplusprimecompareG}
G'^2_{+} (x) < 3700 G_{+}(x).
\end{equation}

Although $G_{-}$ does not vanish, for a possibly better estimation for small values of $G_{-}(x)$, we still work out a bound on $G'^2_{-}/G_{-}$. Again with $u=\cos v$ and $v=2\pi x$ we get from \eqref{eq:GprimesquareperG}
\begin{align}\label{eq:GminusprimesquareperG}
\max_{[-1,1]} \frac{G'^2_{-}}{G_{-}}(x)&=\max_{[-1,1]} \frac{16 \pi^2 (1-u^2) \left[80u^4+32u^3-60u^2-16u+4\right]^2}{-32u^5-16u^4+40u^3+16u^2-8u+1} \approx 3865 < 3900 .
\end{align}
Note that now the denominator does not vanish and there is no singularity to make the numerical maximization difficult. Summing up, we find
\begin{equation}\label{eq:Gpmitsprimeandquotient}
\left\| \frac{G'^2_{\pm}}{G_{\pm}}\right\|_{\infty} < M^{*}(3):=3900.
\end{equation}

The next step is, as in \cite{Krenci}, to see that $d^{(j)}(3)>0$ for the first few values of $j=1,2$.

\begin{lemma}\label{l:diffder3benpoz} We have $d'(3)>0$.
\end{lemma}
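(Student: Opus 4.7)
Differentiating $d(t) = \int_0^{1/2}[G_-^t - G_+^t]\,dx$ under the integral sign yields
\[
d'(3) = \int_0^{1/2} \varphi(x)\, dx, \qquad \varphi(x) := H_{3,1,-}(x) - H_{3,1,+}(x) = G_-^3\log G_- - G_+^3\log G_+,
\]
and since $v^3\log v\to 0$ as $v\to 0^+$, the integrand stays continuous at the zero $x=1/3$ of $G_+$. The plan is to apply the quadrature formula of Lemma \ref{l:quadrature} to $\varphi$ with a division number $N$ kept well below $500$, and to verify that the resulting quadrature sum $S_N$ strictly exceeds the worst-case error $\|\varphi^{IV}\|_\infty/(60\cdot 2^{10}N^4)$.

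The first substantive step is to extract an explicit constant $E\ge\|\varphi^{IV}\|_\infty$. By the triangle inequality this reduces to bounding $\|H_{3,1,\pm}^{IV}\|_\infty$ individually. For $H_{3,1,+}$ the ``large'' bound \eqref{eq:HIVgeneralestimateLARGE} is unusable, because its leading factor $v^{t-4}=v^{-1}$ blows up at the zero of $G_+$; instead I would invoke the ``small'' variant \eqref{eq:HIVgeneralSMALLfurther} with $M^*(3)=3900$ from \eqref{eq:Gpmitsprimeandquotient} and $M_m=2^{m+1}\pi^m(1+4^m+5^m)$ from \eqref{eq:Gpmadmnorm}. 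Specialized to $t=3$, $j=1$, the formula collapses substantially: every term carrying a factor $j(j-1)$, $j(j-1)(j-2)$, $j(j-1)(j-2)(j-3)$ or $t(t-1)(t-2)(t-3)$ vanishes, leaving only a constant combination of $v\Lambda$ and $v^2\Lambda$ (with $v=G_\pm(x)\in[0,9]$ and $\Lambda=\max(|\log v|,1)$); the maxima of these two expressions on $[0,9]$ are both attained at $v=9$ by Lemma \ref{l:lmvsestimate}, so $E$ follows from a short arithmetic.

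Next I would evaluate
\[
S_N = \sum_{n=1}^N \left\{\varphi\!\left(\frac{2n-1}{4N}\right)\!\frac{1}{2N} + \varphi''\!\left(\frac{2n-1}{4N}\right)\!\frac{1}{192N^3}\right\}
\]
using the closed-form expressions \eqref{eq:GpmdefFirst} for $G_\pm,G'_\pm,G''_\pm$. The second derivative $\varphi''$ is read off from \eqref{eq:Hdoubleprimegeneral}: for $j=1$ the formally singular $\log^{-1}G$ factor is cancelled by the $\log G$ appearing in its coefficient (its $j(j-1)=0$ part simply drops), yielding the well-behaved
\[
H''_{3,1,\pm} = G''_\pm G_\pm^2\{3\log G_\pm+1\} + (G'_\pm)^2 G_\pm\{6\log G_\pm+5\},
\]
which remains continuous across $x=1/3$. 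Lemma \ref{l:quadrature} then gives $|d'(3)-S_N|\le E/(60\cdot 2^{10}N^4)$, and the proof is complete once $S_N$ is seen numerically to exceed this error bound.

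The main obstacle is the tension between the size of $E$ and the ceiling $N<500$: since $E$ carries factors of ${M^*}^2\sim 10^7$ and $M_4\sim 10^6$, a quick estimate places $E$ near $10^{11}$, so the error scales as a few times $10^6/N^4$ and $N$ must be pushed close to the stipulated ceiling for the bound to sit comfortably below the expected positive value of $d'(3)$. Should a single quadrature run prove too tight, the natural remedy is to split $[0,1/2]$ into two or three subintervals separating the problematic neighborhood of $x=1/3$ from the rest, and apply Lemma \ref{l:quadrature} on each with its own local midpoint; on subintervals where $G_+$ is bounded away from zero one may even revert to the sharper bound \eqref{eq:HIVgeneralestimateLARGE}, yielding a substantially smaller local error contribution and cheaply gaining the required margin.
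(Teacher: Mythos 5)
Your proposal is correct and follows essentially the same strategy as the paper: the same reduction of $d'(3)$ to an integral of $H_{3,1,\pm}$, the same quadrature Lemma~\ref{l:quadrature}, the same need to bound $\|H^{IV}_{3,1,\pm}\|_\infty$ using the comparison $G'^2\le M^*G$ to tame the zero of $G_+$ at $x=1/3$, and your specialization of $H''_{3,1,\pm}$ matches what \eqref{eq:Hdoubleprimegeneral} gives with $t=3$, $j=1$ after the formal $\log^{-1}G$ cancels. The one place you diverge is the estimate of $\|H^{IV}\|_\infty$: you propose to use the uniform ``small'' bound \eqref{eq:HIVgeneralSMALLfurther} over the whole range $v\in[0,9]$, whereas the paper splits the range, using the direct $M_m$-based bound on $v\ge 3$ (where the $6G'^4/G$ term is harmless and the resulting expression is increasing in $v$, so one just plugs in $v=9$) and reserving the $M^*$-based bound for $v<3$; this buys the paper a constant near $2.3\cdot 10^{10}$ versus the roughly $6\cdot 10^{10}$ per sign that your uniform route produces. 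Since the required division number scales as the fourth root of that constant, the penalty is modest (about $N_0\approx 110$ in place of the paper's $N_0\approx 86$), so your version still sits comfortably below the paper's self-imposed ceiling $N<500$ and the conclusion $d'(3)>0$ follows the same way; the spatial subdivision near $x=1/3$ that you offer as a fallback is not needed here.
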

\begin{remark} A preliminary numerical calculation yields the approximate value $d'(3)\approx 0.01401...$ We don't need the concrete value, but this information suggests us the proper choice of the targeted error bound of $\delta=0.007$ below.
\end{remark}
\begin{proof} From \eqref{eq:ddef} we clearly have
\begin{align}\label{eq:djdef}
d^{(j)}(t)=g_{-}^{(j)}(t)-g^{(j)}_+(t)& =\int_0^{1/2} G_{-}^t(x)\log^{j}G_{-}(x)dx -\int_0^{1/2} G_{+}^t(x)\log^{j}G_{+}(x) dx \notag \\ & =\int_0^{1/2} H_{t,j,-}(x)dx -\int_0^{1/2} H_{t,j,+}(x)dx .
\end{align}

Now we calculate the value -- that is, these two integrals -- numerically for $t=k=3$ and $j=1$. Both integrals should be computed within the error bound $\delta:=0.007$. Invoking Lemma \ref{l:quadrature} we are left with the estimation of $\|H^{IV}_{3,1,\pm}\|_\infty$. The general formula of \eqref{eq:HIVgeneralformula} now specializes to
\begin{align}\label{eq:H314}
H^{IV}= 6\frac{G'^4}{G}+ G'^2G''(66+36L)+GG'G'''(20+24L) +GG''^2(15+18L)+G^2G^{IV}(1+3L).
\end{align}
We now estimate $|H^{IV}(x)|$ distinguishing two cases, the first being when $v:=G(x)\geq 3$. Inserting the estimates of $\|G^{(m)}\|_\infty$ from \eqref{eq:Gpmadmnorm} for $m=0,1,2,3,4$, we get from \eqref{eq:H314}
\begin{align}\label{eq:H314numeric}
|H^{IV}(x)|&\leq 6\frac{(40\pi)^4}{v}+ (40\pi)^2(8\cdot\pi^2\cdot42) (66+36\log v) +v (40\pi)(16\pi^3\cdot186)(20+24\log v)
\notag \\ &\qquad\qquad+v (8\pi^242)^2(15+18\log v)+v^2 (32\pi^4882)(1+3\log v)
\notag \\ &=\pi^4 \{ 15,360,000/v+ 35,481,600+ 19,353,600\log v +4,074,240 v
\\\notag &\qquad\qquad+ 4,889,088 v \log v + 28,224 v^2 + 84,672 v^2 \log v \} ,
\end{align}
which is clearly an increasing function of $v=G(x)$ for $v\geq 2$, e.g. Therefore substituting the maximal possible value $v=9$ we obtain in this case
\begin{equation}\label{eq:H314numfinal}
|H^{IV}(x)|\leq 22,444,818,695 < 2.3 \cdot 10^{10}.
\end{equation}
For smaller values of $v=G(x)$ we estimate \eqref{eq:H314} the same way as it is done in general in \eqref{eq:HIVgeneralestimateSMALL}, with $M_m$ in \eqref{eq:Gpmadmnorm} and $M^{*}$ in \eqref{eq:Gpmitsprimeandquotient} (or, we substitute $t=k=3$ and $j=1$ in \eqref{eq:HIVgeneralestimateSMALL} and use the numerical values of $M_m$ and $M^{*}$ as said). This yields
\begin{equation}\label{eq:H314tricky}
|H^{IV}(x)|\leq \notag
1.2\cdot10^9 v +  (6.7v\log v+ 1.2v^{3/2}
+ 1.4v^{3/2}\log v)\cdot10^8 + (2.8v^2 + 8.3v^2 \log v)\cdot 10^6.
\end{equation}
(Also, we could have substituted $t=k=3$ and $j=1$ in \eqref{eq:HIVgeneralSMALLfurther} and apply \eqref{eq:Gpmadmnorm} and \eqref{eq:Gpmitsprimeandquotient} in that.) The function on the right hand side takes its maximum on $[0,3]$ at $v=3$, thus
$$
|H^{IV}(x)| \leq 7.014\cdot 10^{9} < 8\cdot 10^{9}
$$
is obtained in this second case. In all, we find $\|H^{IV}(x)\| < 2.3 \cdot 10^{10}$, hence in the numerical quadrature formula \eqref{eq:quadrature} the error is estimated by
$$
\frac{2.3 \cdot 10^{10}}{60\cdot2^{10} N^4}.
$$
To bring this down below $\delta=0.007$, we need to chose the step number $N$ as large as to have
$$
\frac{2.3 \cdot 10^{10}}{60\cdot2^{10} N^4}<\delta
\qquad
\textrm{i.e.}
\qquad
N\geq N_0:=  \sqrt[4]{\frac{2.3 \cdot 10^{10}}{60\cdot 2^{10}\cdot0.007}}\approx 86....
$$
Calculating the quadrature formula with $N=100$, we obtain the
approximate value 0.014012641..., whence  $d'(3)> 0.014012641...
-2 \cdot0.007
> 0$.
\end{proof}

\begin{lemma}\label{l:diff2ndder3benneg} We have $d''(3)>0$.
\end{lemma}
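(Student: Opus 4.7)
The plan is to mirror the proof of Lemma \ref{l:diffder3benpoz}, now carried out with $j=2$ rather than $j=1$. By \eqref{eq:djdef}, $d''(3) = \int_0^{1/2} H_{3,2,-}(x)\,dx - \int_0^{1/2} H_{3,2,+}(x)\,dx$, so the target is to approximate both integrals by the quadrature formula \eqref{eq:quadrature} to within an error $\delta$ that will be selected on the basis of a preliminary (unverified) numerical evaluation of $d''(3)$; this preliminary value guides the choice of $\delta$ just as $0.007$ was chosen for $d'(3)$. The rigorous quadrature error then demands that the step number $N$ satisfy $\|H^{IV}_{3,2,\pm}\|_\infty/(60\cdot 2^{10} N^4) < \delta$, and we must verify that this $N$ remains under the self-imposed ceiling $500$.

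Next, I would specialize \eqref{eq:HIVgeneralformula} to $t=3$, $j=2$. Several combinatorial coefficients conveniently vanish: $j(j-1)(j-2)=j(j-1)(j-2)(j-3)=0$ and $t(t-1)(t-2)(t-3)=0$. Hence the formula collapses substantially; the only singular contribution is $G'^4/G$ with scalar factor $22+12L$, and the remaining four groups (involving $G'^2G''$, $GG'G'''$, $G^2G^{IV}$, $GG''^2$) carry only nonnegative powers of $G$ together with a polynomial in $L$ of degree at most $2$.

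Then I would bound $\|H^{IV}_{3,2,\pm}\|_\infty$ by splitting the range of $v:=G_\pm(x)$ into two regimes, exactly as in Lemma \ref{l:diffder3benpoz}. For $v\geq 3$ I would apply the generic large-value bound \eqref{eq:HIVgeneralestimateLARGE} with the explicit constants $M_m = 2^{m+1}\pi^m\{1+4^m+5^m\}$ from \eqref{eq:Gpmadmnorm}; the resulting expression is a polynomial-plus-$v^{-1}$ function that is monotone increasing on $[3,9]$, so its maximum is attained at $v=9$. For $v\leq 3$, where the $1/v$ factor threatens a blow-up near the zero of $G_+$, I would instead invoke \eqref{eq:HIVgeneralestimateSMALL} together with the comparative bound $\|G'^2_\pm/G_\pm\|_\infty \leq M^*(3)=3900$ from \eqref{eq:Gpmitsprimeandquotient}; this replaces the singular $v^{t-4}G'^4$ term by $v^{t-2}(M^*)^2=v(M^*)^2$, removing the singularity, and the resulting function is elementary to maximize on $[0,3]$.

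Combining the two regime bounds yields a uniform estimate $\|H^{IV}_{3,2,\pm}\|_\infty\leq C$ for an explicit constant $C$, whence the minimal admissible step count is $N_0 \approx (C/(60\cdot 2^{10}\delta))^{1/4}$. I would then execute the quadrature \eqref{eq:quadrature} with some $N\geq N_0$ for both $H_{3,2,-}$ and $H_{3,2,+}$, subtract the numerical outputs, and conclude $d''(3) > (\text{computed quadrature value}) - 2\delta > 0$. The main obstacle is that with $j=2$ the bound on $|H^{IV}|$ carries additional $L^2$ terms and larger combinatorial coefficients, so $C$ is noticeably bigger than in Lemma \ref{l:diffder3benpoz}; combined with the expectation that $|d''(3)|$ may well be smaller than $d'(3)\approx 0.014$, a tighter $\delta$ and hence larger $N$ are to be expected. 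The principal danger is that $N$ approaches or exceeds the ceiling of $500$; should this occur, the fallback is to subdivide the $x$-interval and apply the quadrature piecewise with locally sharper $|H^{IV}|$ bounds, exploiting the fact that $G_+$ is small only in a neighbourhood of $x=1/3$, so that the delicate $v^{-1}$ behaviour is confined to one small subinterval.
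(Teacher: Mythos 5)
Your proposal matches the paper's proof essentially step for step: specialize \eqref{eq:HIVgeneralformula} to $t=3$, $j=2$, bound $\|H^{IV}_{3,2,\pm}\|_\infty$ by splitting into a large-$v$ regime via \eqref{eq:HIVgeneralestimateLARGE} and a small-$v$ regime that removes the $G'^4/G$ singularity using the comparative bound $M^*(3)=3900$ from \eqref{eq:Gpmitsprimeandquotient}, then apply the quadrature \eqref{eq:quadrature}. In the paper's actual numbers $\delta=0.04$ suffices, $\|H^{IV}\|_\infty\leq 7\cdot 10^{10}$, and $N_0\approx 73$ so that $N=100$ already gives the quadrature value $\approx 0.0876$; thus your concern about $N$ approaching the ceiling of $500$ does not materialize, and $d''(3)$ in fact exceeds $d'(3)$ rather than being smaller.
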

\begin{remark} Preliminary numerical calculation yields $d''(3) \approx 0.087602...$ .
\end{remark}
\begin{proof} From \eqref{eq:djdef} now we calculate the value -- that is, these two integrals -- numerically for $t=k=3$ and $j=2$. Both integrals should be computed within the error bound $\delta:=0.04$. As before, invoking Lemma \ref{l:quadrature} we are left with the estimation of $\|H^{IV}_{3,2,\pm}\|_\infty$. The general formula of \eqref{eq:HIVgeneralformula} now specializes to
\begin{align}\label{eq:H324}
H^{IV}&= G'^2G''(36L^2+132L+72)+GG''^2 (18L^2+30 L+6) \notag \\ &+GG'G'''(24L^2+40L+8) + G^2G^{IV}(3L^2+2L) + \frac{G'^4}{G}(12L+22).
\end{align}
Similarly as before, in the estimation of $|H^{IV}(x)|$ we distinguish two cases. Namely, we separate cases according to $v:=G(x)\geq e$ or $0\leq v <e$. For the case when $e \leq v\leq 9$, i.e. $1 \leq L\leq \log 9$, application of \eqref{eq:Gpmadmnorm} after substituting $t=k=3$ and $j=2$ in \eqref{eq:HIVgeneralestimateLARGE} (in other words, using \eqref{eq:Gpmadmnorm} in estimating \eqref{eq:H324}) yields
\begin{align}\label{eq:H324numeric}
|H^{IV}(x)|&\leq (40\pi)^2336\pi^2 (36L^2+132L+72)+G(336\pi^2)^2(18L^2+30 L+6)+G 40\pi 2976 \pi^3
\notag \\ &\qquad\qquad \cdot(24L^2+40L+8) + G^2 28,224 \pi^4 (3L^2+2L) + \frac{(40\pi)^4}{G}(12L+22)
\notag \\ & \leq \pi^4 \{ 288,064,517.... +43,137,255...v + 344,030...v^2+ \frac{123,818,739}{v}  \},
\end{align}
which is, by easy calculus, an increasing function of $v=G(x)$ for $v\geq 2$, e.g. Therefore substituting the maximal possible value $v=9$ yields in this case
\begin{equation}\label{eq:H324numfinal}
|H^{IV}(x)|< 7\cdot 10^{10}.
\end{equation}

For smaller values of $G(x)$ when $0\leq v:=G(x)\leq e$, in \eqref{eq:HIVgeneralSMALLfurther} we substitute $t=k=3$ and $j=2$ and then use \eqref{eq:Gpmadmnorm} for $m\geq 2$ and also \eqref{eq:Gpmitsprimeandquotient}, leading to
\begin{align}\label{eq:H324tricky}
|H^{IV}(x)|&\leq v \La^2 \left( 34{M^{*}}^2 + 240 M^{*} M_2 + 36 \sqrt{M^{*}} M_3
+ 54 M_2^2 \right)+ v^2\La^2 \left( 36 \sqrt{M^{*}} M_3 + 5 M_4 \right)
\notag \\& \le e \left( 34\cdot {3900}^2 + 240 \cdot 3900 \cdot 336 \pi^2
+ 36 \sqrt{3900} \cdot 3040 \pi^3 + 54 \cdot 336^2 \pi^4 \right)
\\ &  \qquad + e^2 \left( 36 \sqrt{3900} \cdot 3040\pi^3 + 5 \cdot 28,224 \pi^4 \right) \approx 13,700,830,408 < 2 \cdot 10^{10},\notag
\end{align}
applying also that on $[0,e]$ $v\La^2\leq e$ and $v^2\La^2 \leq e^2$.

Summing up,
$$
\|H^{IV}\|_{\infty} \leq 7\cdot 10^{10},
$$
hence in the numerical quadrature formula \eqref{eq:quadrature} the error is estimated by
$$
\frac{7 \cdot 10^{10}}{60\cdot2^{10} N^4}.
$$
We need to chose the step number $N$ large enough to bring this error below $\delta=0.04$, i.e. to have
$$
N\geq N_0:=  \sqrt[4]{\frac{7 \cdot 10^{10}}{60\cdot2^{10}\cdot0.04}}\approx 73.05... .
$$
Calculating the quadrature formula with $N=100$, i.e. step size $h=0.005$, we obtain the numerical approximate value 0.08760174..., so $d''(3)> 0.08760174... -2 \cdot0.04 > 0$.
\end{proof}

Our next aim will be to show that $d''$ is concave in $[3,4]$, i.e. that $d^{IV}<0$. That will be the content of Lemma \ref{l:dthreeprime}. To arrive at it, our approach will be a computation of some approximating polynomial, which is, apart from a possible slight and well controlled error, a Taylor polynomial of $d^{IV}$.

Numerical tabulation of values gives that $d^{IV}$ is decreasing from $d^{IV}(3)\approx -0.068447...$ to even more negative values as $t$ increases from 3 to 4. Thus our goal is to set $n\in \NN$ and $\delta_j>0$, ($j=0,\dots,n+1)$ suitably so that in the Taylor expansion
\begin{equation}\label{eq:d3Taylor_2}
d^{IV}(t)=\sum_{j=0}^n \frac{d^{(j+4)}(\frac72)}{j!}\left(t-\frac72\right)^j +R_{n}(d^{IV},t),\quad
R_{n}(d^{IV},t)=\frac{d^{(n+5)}(\xi)}{(n+1)!}\left(t-\frac72\right)^{n+1}
\end{equation}
the standard error estimate
\begin{align}\label{eq:Rd5t}
|R_n(d^{IV},t)|& \leq \frac{\|H_{\xi,n+5,+}\|_{L^1[0,1/2]} + \|H_{\xi,n+5,-}\|_{L^1[0,1/2]}}{(n+1)!} \cdot 2^{-(n+1)} \notag \\ &\leq  \frac{\frac12\|H_{\xi,n+5,+}\|_\infty + \frac12\|H_{\xi,n+5,-}\|_\infty }{(n+1)! 2^{n+1}}\\ & \leq \frac{\max_{3\leq \xi\leq 4} \|H_{\xi,n+5,+}\|_\infty + \max_{3\leq \xi\leq 4} \|H_{\xi,n+5,-}\|_\infty}{(n+1)! 2^{n+2}}\notag
\end{align}
provides the appropriately small error $\|R_n(d^{IV},\cdot)\|_\infty <\delta_{n+1}$, while with appropriate approximation $\overline{d}_j$ of $d^{(j+4)}(7/2)$,
\begin{equation}\label{eq:djoverbarcriteria}
\left\|\frac{d^{(j+4)}(\frac72)-\overline{d}_j}{j!}\left(t-\frac72\right)^j\right\|_\infty =\frac{\left|d^{(j+4)}(\frac72)-\overline{d}_j\right|}{2^j j!}< \delta_j\qquad (j=0,1,\dots,n).
\end{equation}
Naturally, we wish to chose $n$ and the partial errors $\delta_j$ so that $\sum_{j=0}^{n+1}\de_j <\de:=0.068$, say, so that $d^{IV}(t)< P_n(t) +\de$ with
\begin{equation}\label{eq:Pndef}
P_n(t):=\sum_{j=0}^n \frac{\overline{d}_j}{j!}\left(t-\frac72\right)^j.
\end{equation}
Here the approximate values $\dbj$ will be obtained by numerical integration, using the quadrature formula \eqref{eq:quadrature} in approximating $d^{j+4}(7/2)$, which has the integral representation \eqref{eq:djdef} with $j=0,\dots,n$. To be precise, we apply the error formula of \eqref{eq:quadrature} with $N_j\in\NN$ steps, where $N_j$ are set in function of a prescribed error of approximation $\eta_j$, which in turn will be set in function of the choice of $\de_j$.

So now we carry out this programme. First, as $G_{\pm}(x)\in [0,9]$, $|G^{\xi}_{\pm}(x)\log^m G_{\pm}(x)|\leq \max_{[0,9]} |v^{\xi}\log^{m}v |=\alpha_{\xi,m}^{*}$, which we consider with $\xi\in[3,4]$ and $m=n+5 \geq 4$. By Lemma \ref{l:lmvsestimate} we derive for all $n\leq 18$ that
\begin{equation}\label{eq:max34}
\|H_{\xi,n+5,\pm}(x)\|_{\infty} \leq  9^4 \log^{n+5} 9 =6561\cdot 2^{n+5} \log^{n+5}3 \qquad (3 \leq \xi\leq 4,\,1 \leq n \leq 18).
\end{equation}
In view of \eqref{eq:Rd5t} this yields $|R_n(d^{(4)},t)|\leq \dfrac{104,976 \log^{n+5}3}{(n+1)!}<0.011=:\de_{11}$ for $n=10$.

Now we must set $\de_0,\dots,\de_{10}$, too. So let now $\de_j=0.005$ for each $j=0,\dots,10$. The goal is that the termwise error \eqref{eq:djoverbarcriteria} would not exceed $\de_j$, which will be guaranteed by $N_j$ step quadrature approximation of the two integrals defining $d^{(j+4)}(7/2)$ with prescribed error $\eta_j$ each. Therefore, we set $\eta_j:=\de_j 2^jj!/2$, and note that in order to have \eqref{eq:djoverbarcriteria} it suffices that
\begin{equation}\label{eq:Njchoice}
N_j > N_j^{\star}:=\sqrt[4]{\frac{\|H^{IV}_{7/2,j+4,\pm}\|_\infty}{60\cdot 2^{10} \eta_j}} = \sqrt[4]{\frac{\|H^{IV}_{7/2,j+4,\pm}\|_\infty}{60\cdot 2^{10} j! 2^{j-1} \de_j}}
\end{equation}
according to Lemma \ref{l:quadrature}. So at this point we estimate $\|H^{IV}_{7/2,j+4,\pm}\|_\infty$ for $j=0,\dots,10$ to find appropriate values of $N_j^{\star}$.

\begin{lemma}\label{l:HIVnorm} For $j=0,\dots,10$ we have the numerical estimates of Table \ref{table:HIVnormandNj} for the values of $\|H^{IV}_{7/2,j+4,\pm}\|_\infty$. Setting $\de_j=0.005$ for $j=0,\dots,10$ the approximate quadrature of order $500=:N=:N_j\geq N_j^{\star}$ with the listed values of $N_j^{\star}$ yield the approximate values $\overline{d}_j$ as listed in Table \ref{table:HIVnormandNj}, admitting the error estimates \eqref{eq:djoverbarcriteria} for $j=0,\dots,10$. Furthermore, $\|R_{10}(d^{IV},t)\|_{\infty} <0.011=:\de_{11}$ and thus with the approximate Taylor polynomial $P_{10}(t)$ defined in \eqref{eq:Pndef} the approximation $|d^{IV}(t)-P_{10}(t)|<\de:=0.068$ holds uniformly for $ t \in [3,4]$.
\begin{table}[h!]
\caption{Estimates for \label{table:HIVnormandNj} values of $\|H^{IV}_{7/2,j+4,\pm}\|_\infty$, corresponding values of $N_j^{\star}$ with $\de_j:=0.005$, and values of
$\overline{d_j}$ with $N:=N_j:=500$ for $j=0,\dots,10$.}
\begin{center}
\begin{tabular}{|c|c|c|c|}
$j$ \qquad & estimate for $\|H^{IV}_{7/2,j+4,\pm}\|_\infty$ & $N_j^{\star}$ & $\overline{d_j}$\\
0 & $ 3.3 \cdot 10^{12}$ & 383 & -8.097236891\\
1 & $ 9.1 \cdot 10^{12}$ & 415 & -37.59530251\\
2 & $ 2.5 \cdot 10^{13}$ & 378 & -141.3912224\\
3 & $ 6.8 \cdot 10^{13}$ & 310 & -468.2134571\\
4 & $ 1.9 \cdot 10^{14}$ & 239 & -1423.831595\\
5 & $ 4.8 \cdot 10^{14}$ & 169 & -4074.963995\\
6 & $ 2.8 \cdot 10^{15}$ & 142 & -11,148.7318\\
7 & $ 2.6 \cdot 10^{16}$ & 128 & -29,465.89339\\
8 & $ 2.7 \cdot 10^{17}$ & 115 & -75,792.43387\\
9 & $ 2.9 \cdot 10^{18}$ & 101 & -190,751.6522\\
10 & $ 3.4 \cdot 10^{19}$ & 88 & -471,634.7482\\
\end{tabular}
\end{center}
\end{table}
\end{lemma}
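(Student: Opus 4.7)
The plan is to execute the three-step scheme prescribed in \eqref{eq:d3Taylor_2}--\eqref{eq:Njchoice}: (i) bound $\|H^{IV}_{7/2,j+4,\pm}\|_{\infty}$ for $j=0,\dots,10$; (ii) verify that with $\delta_j:=0.005$ the resulting $N_j^{\star}$ does not exceed $N=500$, so that the quadrature \eqref{eq:quadrature} delivers numerical values $\overline{d}_j$ meeting \eqref{eq:djoverbarcriteria}; (iii) bound the Taylor remainder via \eqref{eq:max34}, and collect all errors.

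For step (i), I would repeat the dichotomy already used in Lemmas \ref{l:diffder3benpoz} and \ref{l:diff2ndder3benneg}: on $\{x\in[0,1/2]:G_{\pm}(x)\geq 3\}$ estimate $|H^{IV}|$ via \eqref{eq:HIVgeneralestimateLARGE} with the explicit constants $M_m=M_m(3)$ from \eqref{eq:Gpmadmnorm}; on the complementary set estimate it via the tighter \eqref{eq:HIVgeneralestimateSMALL}, or its even cruder consequence \eqref{eq:HIVgeneralSMALLfurther}, using $M^{*}(3)=3900$ from \eqref{eq:Gpmitsprimeandquotient}. Each substitution reduces the sup-norm bound to maximising a one-variable expression in $v\in[0,9]$ consisting of low-degree powers of $v$ multiplied by powers of $\Lambda=\max(|\log v|,1)$; by Lemma \ref{l:lmvsestimate} each summand has at most one interior critical point, so its maximum over any finite subinterval can be located explicitly. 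The resulting numerical majorants are the $\|H^{IV}\|_\infty$ column of Table \ref{table:HIVnormandNj}, and inserting them into \eqref{eq:Njchoice} with $\delta_j:=0.005$ gives the $N_j^{\star}$ column, whose entries are all at most $415<500$.

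For step (ii), I would run the quadrature \eqref{eq:quadrature} with $N=500$ nodes on each of the two integrands $H_{7/2,j+4,\pm}$ of \eqref{eq:djdef}; by construction of $\eta_j=\delta_j\,2^{j-1}j!$ the individual quadrature errors are below $\eta_j$, so taking the difference the approximations $\overline{d}_j$ in the last column of Table \ref{table:HIVnormandNj} satisfy \eqref{eq:djoverbarcriteria} with $\delta_j=0.005$. For step (iii), applying \eqref{eq:max34} with $n=10$ gives $\|H_{\xi,15,\pm}\|_{\infty}\leq 9^4\log^{15}9=6561\cdot 2^{15}\log^{15}3$ uniformly for $\xi\in[3,4]$, and substitution into \eqref{eq:Rd5t} yields
\[
\|R_{10}(d^{IV},\cdot)\|_\infty \leq \frac{104976\,\log^{15}3}{11!}<0.011=:\delta_{11},
\]
an elementary arithmetic check. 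Collecting all errors, uniformly on $[3,4]$ we conclude $|d^{IV}(t)-P_{10}(t)|\leq\sum_{j=0}^{10}\delta_j+\delta_{11}=11\cdot 0.005+0.011=0.066<0.068=\delta$.

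The principal obstacle is not the mechanics of the quadrature but keeping $N_j^{\star}\leq 500$ for the larger values of $j$: the factor $\Lambda^{j+4}$ with $j=10$ is of order $\log^{14}9\approx 10^{9}$ near $v=9$, so any crude bound of the form $v^{t-4}M_1^4$ without the $M^{*}$-refinement near the zero of $G_{+}$ would force $N_j^{\star}$ into the thousands. The two-regime split, together with the choice $\delta_j=0.005$ (rather than a smaller uniform error, which would tighten $\|R_{10}\|_\infty$ but blow up the $N_j^{\star}$), is calibrated so that both the large-$v$ growth of $\log^{j+4}v$ and the small-$v$ singular powers of the cruder bound are simultaneously kept tame enough for $N=500$ to suffice throughout $j=0,\dots,10$.
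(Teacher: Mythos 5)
Your proposal follows the same three-step scheme as the paper's proof — the two-regime split of $[0,1/2]$ according to the size of $v=G(x)$, the large-$v$ bound via \eqref{eq:HIVgeneralestimateLARGE} with the $M_m(3)$, the small-$v$ bound via \eqref{eq:HIVgeneralSMALLfurther} with $M^{*}(3)=3900$, the remainder bound from \eqref{eq:max34} and \eqref{eq:Rd5t}, and the total error budget $11\cdot 0.005 + 0.011 = 0.066 < 0.068$. The only cosmetic difference is your stated threshold ($G\geq 3$ vs.\ the paper's $G\geq e$ in this lemma, and its detailed convexity argument showing the large-$v$ expression is maximized at $v=9$ for $j\le 14$); the approach is otherwise identical.
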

\begin{proof} We start with the numerical upper estimation of $H^{IV}_{7/2,j,\pm}(x)$ for $3\leq x\leq 4$, where now in view of the shift of indices we need the estimation for $4\leq j \leq 14$. All what follows is not sensitive to $j\leq 14$, but it is convenient that $j\geq 4$, as otherwise in some derivatives the powers of $L(x)=\log G(x)$ would diminish, changing the formula slightly.

\comment{

In the general formula \eqref{eq:HIVgeneralformula} now we substitute $t=7/2$ and apply the estimates \eqref{eq:Gpmadmnorm} of $M_2$, $M_3$ and $M_4$, which yields\rev{Compare to (75). Common root formula?}
\begin{align*}
|H^{IV}_{7/2,j,\pm}(x)|&\leq \frac{G'^4}{\sqrt{G}}\Big\{\left[j(j-1)(j-2)(j-3)+12j(j-1)(j-2)+43j(j-1)+44j\right] \ell^{j-4}
\\ & \qquad \qquad
+ \left[ \frac14 j(j-1)(j-2)+ \frac{43}{16} j(j-1) + \frac{33}{4}j+6.5625 \right]\ell^j\Big\}
\\& + 3316.18... \sqrt{G} G'^2 \Big\{\left[ j(j-1)(j-2)+10j(j-1)+17.75j \right] \ell^{j-3}
\\&\qquad\qquad +\left[ \frac{7.5}{12}j(j-1)+ \frac{55.75}{3}j+ 13.125\right]\ell^j \Big\}
\\ & + 377036.32... \cdot G^{1.5}|G'| \Big\{[j(j-1)+6j]\ell^{j-2}+[1.5j+8.75]\ell^j\Big\}
\\&  +2749274.19... \cdot G^{2.5}\left\{j\ell^{j-1}+3.5\cdot \ell^j \right\}
\\& + 32991290.22... \cdot G^{1.5}\Big\{[j(j-1)+6j]\ell^{j-2}+[1.5j+8.75]\ell^j \Big\}
\end{align*}

As a general rule, we further insert the direct norm estimate
$\|G'\|_\infty\leq M_1$ for $x$-values with $v:=G(x)\geq e$, say;
then $\ell=L(x)=\log G(x)\in [1,\log 9]$ and a direct maximum
estimate, i.e. estimating $\ell$ by $\log 9$, will also be written
in. With these estimates applied, the resulting estimation will be
a formula in function of $v\in [e,9]$, and the estimation ends by
finding the maximum of this expression in $[e,9]$. Generally this
will be easy as the formula happens to be an increasing function
of $v$ and thus the maximum is attained at the value $v=9$.

However, for small values of $v:=G(x)$ -- i.e. when $0\leq v \leq
e$ -- we will invoke combined estimates of $G'^2/G$ instead, thus
reducing the occurring factors of $G$ in the denominator. This
will be useful in particular when $G$ happens to vanish (as it
may, at least in principle, and as it indeed does so for
$G_{+}(x)$ in case $k=3$). Again, this estimation results in a
formula entirely depending only on the value of $v$, but the
formula will be a sum of terms of the form $v^a|\log v|^b$ with
$a,b\geq 0$ and $a>0$ whenever $b>0$. So even if $\ell:=|\log v|$
can become $\infty$ where $v=0$, the occurring combined terms,
thanks to the elimination of the negative powers of $v=G(x)$, will
be continuous and have an explicit maximum value in the interval
$[0,e]$. Finding the maximum of each terms -- again usually at the
right endpoint where $v=e$-- will provide the final estimation in
this second case of small values of $v=G(x)$.

}

When $v=G(x)\geq e$, substitution of $t=7/2$ in \eqref{eq:HIVgeneralestimateLARGE} while using the estimates \eqref{eq:Gpmadmnorm} of $\|G^{(m)}\|$ and $\ell\leq \log 9< 2.2$ yields the estimate
\begin{align*}
|H^{IV}_{7/2,j,\pm}(x)|
\leq  2.2^j\Big\{\frac{A}{\sqrt{G}}+ B \cdot \sqrt{G}\Big\},
\end{align*}
where
\begin{align*}
A:=A_j:&=9\cdot 10^6 \left[j^4-2j^3+105j^2+134j+154 \right] \\
B:=B_j:&= 2.8 \cdot 10^7 j^3 +3.9\cdot 10^8 j^2 + 5.7 \cdot 10^{9} j + 1.2 \cdot 10^{10}.
\end{align*}
This last function is a strictly convex function of $u:=\sqrt{v}=\sqrt{G(x)}$
-- so it must have a unique minimum and two monotonic parts before
and after the minimum point. Easy calculus yields that the minimum
is located at $v_0:=\frac{A}{B}$. Now $v_0$ is less than $e$, when
$j<14$, hence then the function is increasing on $[e,9]$ and it
achieves its maximum at $v=9$. When $j=14$, the minimum falls
inside $[e,9]$, and the maximum is the maximum of the values at
$e$ and $9$, but the latter being much larger, we again find that
our estimate is maximized taking $v=9$. Finally, substitution of $v=9$ yields
\begin{equation}\label{eq:HIVgenerallargefinal}
|H^{IV}_{7/2,j,\pm}(x)| \leq 2.2^j \cdot \left\{3\cdot 10^6 j^4 + 7.8\cdot 10^7 j^3 + 1.5 \cdot 10^9 j^2 + 1.8 \cdot 10^{10} j + 3.7 \cdot 10^{10} \right\}.
\end{equation}

When $G(x)<e$, we substitute $k=3$ and $t=7/2$ in \eqref{eq:HIVgeneralSMALLfurther}, and apart from the values of $M_2,M_3$ and $M_4$ we also use the last estimate of \eqref{eq:Gpmitsprimeandquotient}. Further, we write in $v^{5/2}\La^j \leq e\cdot v^{3/2}\La^j$ and, by means of Lemma \ref{l:lmvsestimate}, $\max_{[0,e]} v^{3/2} \La^j= \max \left( e^{3/2}, (\frac{2j}{3e})^j \right)$
which then yields
\begin{align}\label{eq:HIVgenSMALLfurtherk3t72}
|H^{IV}_{7/2,j,\pm}(x)|&\leq
\max\left( e^{3/2},\left( \frac{2j}{3e}\right)^j \right)
\left\{ 1.6 \cdot 10^{7} j^4 + 1.1 \cdot 10^{8} j^3 +  5.6 \cdot 10^{8} j^2 + 1.6 \cdot 10^{9} j + 1.9 \cdot10^{9} \right\}.
\end{align}

From here we take the maximum of \eqref{eq:HIVgenerallargefinal} and the above \eqref{eq:HIVgenSMALLfurtherk3t72} for all $j=6,\dots,14$, which means using \eqref{eq:HIVgenerallargefinal} up to $j=9$ and then \eqref{eq:HIVgenSMALLfurtherk3t72} for $j=10,\dots,14$, leading to the upper estimates of $\|H^{IV}_{7/2,j,\pm}\|_{\infty}$ as listed in Table \ref{table:HIVnormandNj}.

Finally, we collect also the resulting numerical estimates of $N_j^{\star}$ -- as given by the formulae \eqref{eq:Njchoice} -- in Table \ref{table:HIVnormandNj} and furthermore list the accordingly computed values of $\overline{d_j}$, too, applying the numerical quadrature formula \eqref{eq:quadrature} with step size $h=0.001$, i.e. $N=N_j=500$ steps.
\end{proof}

\begin{lemma}\label{l:dthreeprime} We have $d^{IV}(t)<0$ for all $3 \leq t \leq 4$.
\end{lemma}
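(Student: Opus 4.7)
The plan is to combine Lemma \ref{l:HIVnorm}, which gives the uniform bound $|d^{IV}(t)-P_{10}(t)|<\delta=0.068$ on $[3,4]$, with a direct verification that the explicit polynomial $P_{10}(t)=\sum_{j=0}^{10}\overline{d}_j(t-7/2)^j/j!$ satisfies $P_{10}(t)\le -0.068$ throughout $[3,4]$. Once both are in place, the chain $d^{IV}(t)<P_{10}(t)+\delta\le 0$ closes the proof.

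First I would split the interval at the expansion point $t_0=7/2$. On the right half $t\in[7/2,4]$, setting $s=t-7/2\in[0,1/2]$, every coefficient $\overline{d}_j$ appearing in Table \ref{table:HIVnormandNj} is negative, while $s^j\ge 0$, so the full sum is bounded above by its constant term: $P_{10}(t)\le \overline{d}_0=-8.097\ldots$, which is comfortably below $-0.068$. The same sign argument also gives $P_{10}'(t)\le \overline{d}_1=-37.595\ldots<0$ on $[7/2,4]$, so $P_{10}$ is strictly decreasing on the right half.

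The delicate half is $t\in[3,7/2]$, where the odd-index monomials become positive and the crude sign argument fails. The cleanest route is to prove that $P_{10}$ is monotone decreasing on $[3,4]$ in its entirety. This requires $P_{10}'(t)<0$ on $[3,7/2]$, which is an explicit inequality on a known degree-nine polynomial; splitting the terms of $P_{10}'$ by parity, one bounds the sum of positively contributing ones by $\sum_{j \text{ even}}|\overline{d}_j|(1/2)^{j-1}/(j-1)!$ and compares it with the dominant negative contribution $|\overline{d}_1|$. If that crude estimate turns out to be too loose, the cheap fallback is a fine-grid evaluation of $P_{10}'$ on $[3,7/2]$ together with a norm bound on $P_{10}''$ derived from the tabulated coefficients. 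Once monotonicity is established, $\max_{[3,4]}P_{10}=P_{10}(3)$, and a direct arithmetic evaluation with the values in Table \ref{table:HIVnormandNj} yields $P_{10}(3)\approx -0.0699$, so that $P_{10}(3)+\delta<0$ as required.

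The main obstacle is tightness. The preliminary numerics show $d^{IV}(3)\approx -0.0684$, only a few thousandths below zero, so the whole error budget -- the Lagrange remainder $R_{10}(d^{IV},t)$, the quadrature errors in each $\overline{d}_j$, and any slack in the final verification of $P_{10}(3)\le -\delta$ -- must jointly fit inside a margin of a few thousandths. The calibration $\delta_j=0.005$ and $N_j=500$ made in Lemma \ref{l:HIVnorm} is precisely what preserves that margin, and the entire proof ultimately reduces to checking the concrete arithmetic inequality $P_{10}(3)+0.068<0$ with a handful of decimal digits of safety.
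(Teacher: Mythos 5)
Your high-level framework coincides with the paper's: both use Lemma \ref{l:HIVnorm} to reduce the task to showing $p(t):=P_{10}(t)+\delta<0$ on $[3,4]$ with $\delta=0.068$. On $[7/2,4]$ your sign argument (all $\overline{d}_j<0$, $(t-7/2)^j\geq 0$, hence $P_{10}(t)\leq\overline{d}_0$) is correct. The problem is the half $[3,7/2]$.

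The crude parity estimate you propose there does not close. Writing $s=t-7/2\in[-1/2,0]$, the positive contributions to $P_{10}'(t)$ come from even $j$, and at $s=-1/2$ they sum to roughly
\[
\tfrac{|\overline{d}_2|}{1!}\cdot\tfrac12 + \tfrac{|\overline{d}_4|}{3!}\cdot\tfrac18 + \tfrac{|\overline{d}_6|}{5!}\cdot\tfrac1{32} + \cdots \approx 70.7 + 29.7 + 2.9 + \cdots \approx 103,
\]
far exceeding $|\overline{d}_1|\approx 37.6$. So comparing against $|\overline{d}_1|$ alone fails badly; one must include all odd-index negative contributions, and after the near-cancellation the residual is only $P_{10}'(3)\approx -4.0$. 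In other words, the bound is not a crude domination of one term by another but a genuine alternating cancellation, and it must be controlled at \emph{every} $s\in[-1/2,0]$, not just at the endpoint. Your fallback (fine-grid evaluation of $P_{10}'$ plus a norm bound on $P_{10}''$) would work in principle, but it is not executed, and executing it rigorously is essentially a second numerical computation on top of the one already in Lemma \ref{l:HIVnorm}.

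The paper avoids this entirely with a cleaner chain: it evaluates $p^{(j)}(3)$ for $j=0,\dots,7$ and verifies each is negative, then observes that $p^{(8)}(t)=\overline{d}_8+\overline{d}_9(t-7/2)+(\overline{d}_{10}/2)(t-7/2)^2$ is a downward parabola with negative discriminant $\overline{d}_9^2-2\overline{d}_8\overline{d}_{10}<0$, hence $p^{(8)}<0$ on all of $\RR$. Inducting downward (each $p^{(j)}$ is decreasing on $[3,\infty)$ and negative at $3$, so negative on $[3,\infty)$) gives $p<0$ on $[3,4]$ with no grid or cancellation estimates at all. This is both more elementary and immune to the tightness issue you flag. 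Incidentally, your stated $P_{10}(3)\approx -0.0699$ is not right; from Table \ref{table:HIVnormandNj} one gets $P_{10}(3)\approx -0.06846$, leaving a margin of only about $5\cdot 10^{-4}$ over $-\delta$, which makes the unexecuted fine-grid step even more delicate than you suggest.
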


\begin{proof} We approximate $d^{IV}(t)$ by the polynomial $P_{10}(t)$ constructed in \eqref{eq:Pndef} as the approximate value of the order 10 Taylor polynomial of $d^{IV}$ around $t_0:=7/2$. As the error is at most $\de$, it suffices to show that $p(t):=P_{10}(t)+\de<0$ in $[3,4]$.

Now $P_{10}(3)=-0.068458667...$ so $P_{10}(3)+\de<0$.

Moreover, $p'(t)=P_{10}'(t)=\sum_{j=1}^{10} \dfrac{\overline{d}_j}{(j-1)!} (t-7/2)^{j-1}$ and $p'(3)=-4.00969183<0$. From the explicit formula of $p(t)$ we consecutively compute also $p''(3)=-23.12291565<0$, $p'''(3)=-93.80789264<0$ and $p^{(4)}(3)=-324.0046433$, $p^{(5)}(3)=-978.7532737...$, $p^{(6)}(3)=-3144.062078...$,  $p^{(7)}(3)=-5587.909055...$, all $<0$. Thus $p^{(j)}(3)<0$ for $j=0,\dots,7$.

Therefore in order to conclude $p(t)<0$ for $3\leq t\leq 4$ it suffices to show that $p^{(8)}(t)=\overline{d}_8+\overline{d}_9(t-7/2)+ (\overline{d}_{10}/2) (t-7/2)^{2}$ stays negative in the interval $[3,4]$. However, the leading coefficient of $p^{(8)}$ is negative, while it is easy to see that the discriminant $\Delta:=\overline{d}_9^2-2\overline{d}_8 \overline{d}_{10}$ of $p^{(8)}$ is negative, too: $\Delta\approx -3.511\cdot 10^{10}$. Therefore, the whole parabola of the graph of $p^{(8)}$ lies below the $x$-axis i.e. $p^{(8)}(t)<0$ ($\forall t \in \RR$). It follows that also $p(t)<0$ for all $t\geq 3$.
\end{proof}

\begin{proof}[Proof of the $k=3$ case of Conjecture \ref{conj:con3}]

Since $d(3)=d(4)=0$, and $d'(3)>0$, $d$ takes some positive values close to $3$; so in view of Lagrange's (Rolle's) theorem, $d'$ takes some negative values
as well. Therefore, $d'$ decreases from a positive value at $3$ to
some negative value somewhere later; it follows that $d''$ takes
some negative values in $(3,4)$. Also, $d''$ is concave and
$d''(3)>0$ implies that $d''$ changes from positive values towards
negative ones; by concavity, there is a unique zero point $\tau$
of $d''$ in $(3,4)$, where $d''$ has a definite sign change from
positive to negative.

It follows that $d'$, starting with the positive value at 3, first
increases, achieves a maximal positive value at $\tau$, and then
it decreases, reaches zero and then eventually negative values, as
seen above. That is, when it becomes zero at some point $\sigma$,
it already has a negative derivative, and it keeps decreasing from
that point on. So $d'$ is positive until $\sigma$, when it has a
strict sign change and becomes negative until 4. Therefore, $d$
increases until $\sigma$ and then decreases till 4; so $d$ forms a
cap shape and it is minimal at the endpoints $3$ and $4$, where it
vanishes. It follows that $d>0$ in $(3,4)$.

This concludes the proof of the $k=3$ case of Conjecture \ref{conj:con3}.
\end{proof}

\section{The case $k=4$ of Conjecture \ref{conj:con3}}\label{sec:final}

First of all let us record that in case $k=4$ in \eqref{eq:GpmasChebyshev} we are to deal with $G_{\pm}(x)=3+2u\pm2[T_5(u)+T_6(u)]\qquad (u=\cos2\pi x)$
so putting in $T_5(u)= 16u^5-20u^3+5u$ and $T_6(u)= 32u^6-48u^4+17u^2$ a numerical calculation of the occurring polynomials give
\begin{equation}\label{eq:minGpmk4}
\min_{\TT} G_{+} \approx 0.0946...  \qquad {\textrm{and}}\qquad \min_{\TT} G_{-} \approx 0.02776...
\end{equation}
Therefore in case $k=4$  we can estimate $\ell:=|L|=|\log G(x)|< |\log(0.027)|<3.7$ for both signs of $G_{\pm}$.

\begin{lemma}\label{l:diffder4benpoz} We have $d'(4)>0$.
\end{lemma}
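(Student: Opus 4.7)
The plan is to follow the template of Lemma \ref{l:diffder3benpoz} essentially verbatim, only with $k=4$ in place of $k=3$. From \eqref{eq:djdef} with $t=k=4$ and $j=1$ we have
$$
d'(4)=\int_0^{1/2} H_{4,1,-}(x)\,dx-\int_0^{1/2} H_{4,1,+}(x)\,dx,
$$
and we approximate each of the two integrals via the quadrature formula \eqref{eq:quadrature}. A preliminary numerical computation will furnish the expected size of $d'(4)$; based on that, an error tolerance $\de$ for each integral is fixed so that $2\de$ stays comfortably below the expected $d'(4)$. By Lemma \ref{l:quadrature}, to guarantee this tolerance with some step number $N\leq 500$ it suffices to bound $\|H^{IV}_{4,1,\pm}\|_\infty$.

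The main input is therefore an explicit upper estimate on $\|H^{IV}_{4,1,\pm}\|_\infty$. Specializing the general formula \eqref{eq:HIVgeneralformula} to $t=4$, $j=1$ produces an explicit combination of $G$ and its first four derivatives (structurally analogous to \eqref{eq:H314}); the relevant maximum-norm bounds are $M_m(4)=2^{m+1}\pi^m(1+5^m+6^m)$ from \eqref{eq:Gpmadmnorm}. As in the $k=3$ case, I would split the range of $v:=G(x)\in[0,9]$ at some threshold (e.g.\ $v=3$): for $v\geq 3$ apply the direct estimate \eqref{eq:HIVgeneralestimateLARGE}, while for $v<3$ apply \eqref{eq:HIVgeneralestimateSMALL} (or the further simplified \eqref{eq:HIVgeneralSMALLfurther}), which requires a numerical constant $M^{*}(4)$ with $\|G_{\pm}'^2/G_{\pm}\|_\infty\leq M^{*}(4)$.

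To obtain $M^{*}(4)$ one specializes \eqref{eq:GprimesquareperG} using $U_4(u)=16u^4-12u^2+1$, $U_5(u)=32u^5-32u^3+6u$, $T_5(u)=16u^5-20u^3+5u$, $T_6(u)=32u^6-48u^4+18u^2-1$, and maximizes the resulting rational function of $u\in[-1,1]$. By \eqref{eq:minGpmk4} the denominators $G_{\pm}(x)$ remain strictly positive (with minima roughly $0.095$ and $0.028$), so no singularity appears and the maximization is a routine numerical task. The bound $\ell\leq 3.7$ recorded just before the lemma further keeps every logarithmic factor in \eqref{eq:HIVgeneralSMALLfurther} well under control.

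The main obstacle is computational rather than conceptual: because the constants $M_m(4)$ are markedly larger than $M_m(3)$, the resulting bound on $\|H^{IV}_{4,1,\pm}\|_\infty$ will exceed the value $2.3\cdot 10^{10}$ obtained for $k=3$, so the minimal admissible step number $N_0$ determined from \eqref{eq:quadrature} will grow correspondingly; the object of the two-case split above is precisely to keep this growth moderate so that $N_0$ remains below the self-imposed budget $N=500$. Once a sufficient $N\geq N_0$ is fixed, one executes the quadrature \eqref{eq:quadrature} for each of $H_{4,1,+}$ and $H_{4,1,-}$, and the inequality (numerical value of $\int H_{4,1,-}$) $-$ (numerical value of $\int H_{4,1,+}$) $> 2\de$ yields $d'(4)>0$.
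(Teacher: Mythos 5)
Your outline is correct in spirit, but it is more complicated than the paper's actual argument, and the extra machinery you invoke is in fact unnecessary here. The two-case split (and the attendant computation of a constant $M^{*}(4)$ bounding $\|G_{\pm}'^2/G_{\pm}\|_\infty$) was needed for $k=3$ precisely because there $t=3$, so the leading term of \eqref{eq:HIVgeneralformula} contains $G^{t-4}=G^{-1}$, and $G_{+}$ actually vanishes at $x=1/3$. For the present lemma $t=k=4$, so every power of $G$ occurring in $H^{IV}_{4,1,\pm}$ is nonnegative ($G^{0},G^{1},G^{2},G^{3}$ in the specialized formula \eqref{eq:H414}); there is no negative power of $G$ to control. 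Moreover, by \eqref{eq:minGpmk4} both $G_{\pm}$ are bounded below by roughly $0.028$, so the only residual concern — the $\log G$ factors — is tamed by the crude bound $\ell=|\log G|<3.7$ recorded just before the lemma. Plain substitution of the $M_m(4)$ from \eqref{eq:Gpmadmnorm} and $\ell<3.7$ into \eqref{eq:H414} already gives $\|H^{IV}_{4,1,\pm}\|_\infty<1.6\cdot 10^{12}$, which with $\delta=0.003$ forces only $N_0\approx 306$, comfortably within the budget $N=500$; the quadrature then yields $d'(4)\approx 0.0062>2\delta$. So your worry that the direct estimate would blow past the step budget and therefore needs refining by the $G'^2/G$ trick does not materialize; the split is harmless but superfluous for this lemma. (Incidentally, if you do compute $M^{*}(4)$ for other purposes, note that $T_6(u)=32u^6-48u^4+18u^2-1$ as you wrote it, not the $32u^6-48u^4+17u^2$ printed in the paper.)
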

\begin{remark} By numerical calculation, $d'(4)\approx 0.0062067...$.
\end{remark}

\begin{proof}
From \eqref{eq:HIVgeneralformula} with $t=4, j=1$
\begin{align}\label{eq:H414}
 H^{IV}&= G'^4(50+24L)+ 6GG'^2G''(26+24L)+4G^2G'G'''(7+12L)+G^3G^{IV}(1+4L)
\\ & \notag +3G^2G''^2(7+12L).
\end{align}
From this, $\ell<3.7$ and the estimates \eqref{eq:Gpmadmnorm} we get by plain substitution as before $\|H^{IV}\|_\infty < 1.6 \cdot 10^{12}$.

To bring the resulting error estimate down below $\delta=0.003$, we need to chose the step number $N$ as large as to have
$$
\frac{1.6 \cdot 10^{12}}{60\cdot2^{10} N^4}<\delta
\qquad
\textrm{i.e.}
\qquad
N\geq N_0:=  \sqrt[4]{\frac{1.6 \cdot 10^{12}}{60\cdot 2^{10}\cdot0.003}}\approx 306....
$$
Calculating the quadrature formula with $N=500$, we obtain the
approximate value 0.0062067..., whence  $d'(4)> 0.0062067...
-2 \cdot0.003> 0$.
\end{proof}

\begin{lemma}\label{l:diff2ndder4benneg} We have $d''(4)>0$.
\end{lemma}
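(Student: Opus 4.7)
The plan is to proceed exactly as in Lemma \ref{l:diffder4benpoz} (and as in the $k=3$ counterpart Lemma \ref{l:diff2ndder3benneg}): express $d''(4) = \int_0^{1/2} H_{4,2,-}(x)\,dx - \int_0^{1/2} H_{4,2,+}(x)\,dx$ via \eqref{eq:djdef}, evaluate both integrals via the quadrature formula \eqref{eq:quadrature} with sufficiently many nodes, and verify that the numerical result exceeds twice the guaranteed quadrature error. By Lemma \ref{l:quadrature}, the whole task reduces to producing an explicit bound on $\|H^{IV}_{4,2,\pm}\|_\infty$.

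First I would specialize the master formula \eqref{eq:HIVgeneralformula} to $t=4$, $j=2$. Two pleasant simplifications occur: the coefficients $j(j-1)(j-2)(j-3)$ and $[4t-6]j(j-1)(j-2)$ and $j(j-1)(j-2)$ all vanish, so the terms involving $L^{j-4}$ and $L^{j-3}$ disappear; and since $t=4$, the leading factor $G^{t-4}=G^0=1$ means no negative power of $G$ is present anywhere. Consequently $H^{IV}_{4,2,\pm}$ is a polynomial expression in $G,G',G'',G''',G^{IV},L$, schematically a linear combination of $G'^{4}$, $GG'^{2}G''$, $G^2G'G'''$, $G^2G''^{\,2}$, $G^3G^{IV}$ weighted by polynomials in $L=\log G$ of degree at most $2$, with small integer coefficients (each coming from the $t=4$, $j=2$ specialisation of \eqref{eq:HIVgeneralformula}, analogously to \eqref{eq:H324} and \eqref{eq:H414}).

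Second, I would bound $\|H^{IV}_{4,2,\pm}\|_\infty$ by straightforward substitution of $\|G\|_\infty\leq 9$, the derivative bounds $M_m$ from \eqref{eq:Gpmadmnorm} with $k=4$, and — crucially — the uniform bound $\ell=|L|<3.7$ furnished by \eqref{eq:minGpmk4}. Unlike in the $k=3$ analysis there is \emph{no} need to split into the two subranges $v\geq e$ and $v<e$ and no need to invoke a comparison of the form $G'^{\,2}\leq M^{*}G$: both $G_{\pm}$ stay bounded away from zero, so each term admits a direct maximum-norm estimate. A mechanical term-by-term computation then yields a numerical value for $\|H^{IV}_{4,2,\pm}\|_\infty$ in the range of, at worst, a few times $10^{12}$, entirely analogous to the bound $1.6\cdot 10^{12}$ obtained in Lemma \ref{l:diffder4benpoz}.

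Third, guided by a preliminary numerical estimate of $d''(4)$ (which is positive; a quick unverified evaluation suggests a value of the order of a few hundredths), I pick a safety margin $\delta$ strictly smaller than half that value and determine the smallest $N_0$ with $\|H^{IV}_{4,2,\pm}\|_\infty/(60\cdot 2^{10}N_0^{4})<\delta$. By the orders of magnitude above, $N_0$ will be comfortably below the prescribed ceiling $500$; taking $N=500$ and summing the quadrature \eqref{eq:quadrature} applied to $H_{4,2,+}$ and $H_{4,2,-}$ then yields a numerical approximation of $d''(4)$ accurate to within $2\delta$, and since the approximation is positive and exceeds $2\delta$, we conclude $d''(4)>0$. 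The only real obstacle is making sure the constant factors in the estimate of $\|H^{IV}_{4,2,\pm}\|_\infty$ stay small enough that $N_0\leq 500$; this is purely bookkeeping given the explicit closed-form expression and the tight bound $\ell<3.7$ available in the $k=4$ regime.
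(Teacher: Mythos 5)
Your proposal is correct and follows essentially the same route as the paper: specialize \eqref{eq:HIVgeneralformula} to $t=4$, $j=2$, bound $\|H^{IV}_{4,2,\pm}\|_\infty$ by direct substitution of the $M_m$ from \eqref{eq:Gpmadmnorm} and the uniform $\ell<3.7$ from \eqref{eq:minGpmk4} (no $M^{*}$ comparison or case split needed since $G_{\pm}$ does not vanish for $k=4$), then apply the quadrature \eqref{eq:quadrature} with $N=500$ and check the numerical value exceeds $2\delta$. The paper's concrete bound is $\|H^{IV}\|_\infty<7\cdot 10^{12}$ with $\delta=0.027$ and $N_0\approx 255$, consistent with the order of magnitude you anticipate.
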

\begin{remark} By numerical calculation, $d''(4)\approx 0.0541341...$.
\end{remark}
\begin{proof}
From \eqref{eq:HIVgeneralformula} with $t=4, j=2$ and inserting the values of $M_m$s given by (\ref{eq:Gpmadmnorm}) together with $\ell<3.7$ we get $\|H^{IV}\|_\infty < 7\cdot 10^{12}$.

Thus to bring the error below $\delta=0.027$, we need to chose the step number $N$ large enough to have
$$
\frac{7 \cdot 10^{12}}{60\cdot2^{10} N^4}<\delta
\qquad
\textrm{i.e.}
\qquad
N\geq N_0:=  \sqrt[4]{\frac{7 \cdot 10^{12}}{60\cdot 2^{10}\cdot0.027}}\approx 255....
$$
Calculating the quadrature formula with $N=500$, we obtain the approximate value 0.05413417..., whence  $d''(4)> 0.05413417... -2 \cdot0.0027> 0$.
\end{proof}

\begin{lemma}\label{l:diff3ndder4benneg} We have $d'''(4)>0$.
\end{lemma}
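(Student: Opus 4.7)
The plan is to follow exactly the same template used in Lemmas \ref{l:diffder4benpoz} and \ref{l:diff2ndder4benneg}: express $d'''(4)$ as a difference of two integrals via \eqref{eq:djdef} with $t=4$, $j=3$, approximate each by the quadrature formula \eqref{eq:quadrature}, and control the error via a uniform bound on $\|H^{IV}_{4,3,\pm}\|_\infty$ obtained from \eqref{eq:HIVgeneralformula}.

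First, I would substitute $t=4$ and $j=3$ directly into the master formula \eqref{eq:HIVgeneralformula}. A pleasant simplification occurs: since $j(j-1)(j-2)(j-3)=0$, the $L^{j-4}=L^{-1}$ term drops out, so only nonnegative powers $L^0,L^1,L^2,L^3$ of $L=\log G$ appear. The resulting explicit polynomial in $G,G',G'',G''',G^{IV},L$ can then be bounded crudely by inserting the derivative norms $M_m(4)$ from \eqref{eq:Gpmadmnorm} together with the logarithm bound $\ell<3.7$ coming from \eqref{eq:minGpmk4}. This is precisely the same two-line computation that gave $\|H^{IV}\|_\infty<1.6\cdot 10^{12}$ and $<7\cdot 10^{12}$ for $j=1$ and $j=2$; for $j=3$ I expect the analogous estimate to come out somewhere in the range $\|H^{IV}_{4,3,\pm}\|_\infty < C$ with $C$ of order $10^{13}$.

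Second, once $C$ is in hand, a preliminary numerical quadrature (not yet error-certified) reveals the approximate value $D\approx d'''(4)$. I then choose a target error $\delta$ with $2\delta<D$, compute the required step number
\[
N_0:=\sqrt[4]{\frac{C}{60\cdot 2^{10}\,\delta}}
\]
from \eqref{eq:quadrature}, and verify $N_0\le 500$. Finalizing the quadrature at $N=500$ (step size $h=0.001$), the bound \eqref{eq:quadrature} gives $|\overline{D}-d'''(4)|\le 2\delta$, hence $d'''(4)>\overline{D}-2\delta>0$, exactly as in Lemmas \ref{l:diffder4benpoz} and \ref{l:diff2ndder4benneg}.

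The main point to emphasize — and the reason this is far easier than the $k=3$ analogues — is that for $k=4$ both $G_{+}$ and $G_{-}$ are bounded strictly away from zero by \eqref{eq:minGpmk4}. Consequently $\ell<3.7$ holds uniformly, we can use the coarse estimate \eqref{eq:HIVgeneralestimateLARGE} directly with the $M_m(4)$ bounds, and no case split into $v\ge e$ versus $v<e$ is needed, nor any appeal to the $G'^2/G$ comparison bound \eqref{eq:Gpmitsprimeandquotient}. The only obstacle is therefore bookkeeping: confirming the chosen $\delta$ is small enough that $N_0\le 500$ (so that our self-imposed limit on the step number is respected) while simultaneously being comfortably smaller than $D/2$. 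Given the observed trend of growth of $|d^{(j)}(4)|$ with $j$, this should present no real difficulty.
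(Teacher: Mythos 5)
Your proposal is correct and follows essentially the same route as the paper: substitute $t=4$, $j=3$ into \eqref{eq:HIVgeneralformula}, note the $L^{j-4}$ term vanishes since $j(j-1)(j-2)(j-3)=0$, bound $\|H^{IV}_{4,3,\pm}\|_\infty$ using $M_m(4)$ from \eqref{eq:Gpmadmnorm} and $\ell<3.7$ from \eqref{eq:minGpmk4} (the paper obtains $5.4\cdot10^{13}$, confirming your order-of-magnitude guess), then certify $N=500$ via \eqref{eq:quadrature} with $\delta=0.112$ against the computed value $\approx 0.2256$. Your observation that no case split or $G'^2/G$ comparison is needed for $k=4$ is also exactly the point the paper relies on.
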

\begin{remark} By numerical calculation, $d'''(4)\approx 0.2255707...$.
\end{remark}
\begin{proof}
From \eqref{eq:HIVgeneralformula} with $t=4, j=3$ and calculating with the same values of $M_m$ from \eqref{eq:Gpmadmnorm} as above -- together with $\ell <3.7$ in view of \eqref{eq:minGpmk4} -- we arrive at
\begin{align*}
H^{IV}&= G'^4(60+210L+150L^2+24L^3)+ 6GG'^2G''(6+54L+78L^2+24L^3) \notag
\\ & +4G^2G'G'''(6L+21L^2+12L^3)+G^3G^{IV}(3L^2+4L^3)+3G^2G''^2(6L+21L^2+12L^3),
\\ \notag |H^{IV}|& <5.4\cdot 10^{13}.
\end{align*}
To bring this down below $\delta=0.112$, we need to chose the step number $N$ large enough to have
$$
\frac{5.3 \cdot 10^{13}}{60\cdot2^{10} N^4}<\delta
\qquad
\textrm{i.e.}
\qquad
N\geq N_0:=  \sqrt[4]{\frac{5.3 \cdot 10^{13}}{60\cdot 2^{10}\cdot0.112}}\approx 298....
$$
Calculating the quadrature formula with $N=500$, we obtain the
approximate value 0.22557089..., whence  $d'''(4)> 0.22557089...
-2 \cdot0.112> 0$.
\end{proof}

So we arrive at the analysis of $d^{V}$. Numerical tabulation of values give that $d^{V}$ is decreasing from $d^{V}(4)\approx -2,217868...$ to even more negative values as $t$ increases from 4 to 5. So we now set forth \emph{proving} that $d^{V}<0$ in $[4,5]$. To arrive at it, our approach will be a computation of some approximating polynomial $p(t)$, which is, within a small and well controlled error, will be a Taylor polynomial of $d^{V}(t)$. However, as we intend to keep the step number $N$ of the numerical integration under 500, we take the liberty of approximating $d^{V}$ by different polynomials (using different Taylor expansions) on various subintervals of $[4,5]$. More precisely, we divide the interval $[4,5]$ into 2 parts, and construct approximating Taylor polynomials around $4.25$ and $4.75$.

So now setting $t_0=4.25$ or $t_0=4.75$, the Taylor approximation will have the form
\begin{equation}\label{eq:dVTaylor45}
d^{V}(t)=\sum_{j=0}^n \frac{d^{(j+5)}(t_0)}{j!}\left(t-t_0\right)^j +R_{n}(d^{V},t_0,t),\quad
R_{n}(d^{V},t_0,t)=\frac{d^{(n+6)}(\xi)}{(n+1)!}\left(t-t_0\right)^{n+1}.
\end{equation}
Therefore instead of \eqref{eq:Rd5t} we can use
\begin{align}\label{eq:Rd5tnegyed}
|R_n(d^{V},t_0,t)|& \leq \frac{\|H_{\xi,n+6,+}\|_{L^1[0,1/2]} + \|H_{\xi,n+6,-}\|_{L^1[0,1/2]}}{(n+1)!} \cdot 4^{-(n+1)} \notag \\ &\leq  \frac{\frac12\|H_{\xi,n+6,+}\|_\infty + \frac12\|H_{\xi,n+6,-}\|_\infty }{(n+1)! 2^{2n+2}}\\ & \leq \frac{\max_{|\xi-t_0|\leq 1/4} \|H_{\xi,n+6,+}\|_\infty + \max_{|\xi-t_0|\leq 1/4} \|H_{\xi,n+6,-}\|_\infty}{(n+1)! 2^{2n+3}}.\notag
\end{align}
So once again we need to maximize \eqref{eq:Hdef}, that is functions of the type $v^{\xi}|\log v|^m $, on $[0,9]$ (or, more precisely, on the subinterval ${\mathcal R}(G)\approx [0.02776...,9]$, where the values are actually attained by $v:=G(x)$).
So now similarly to \eqref{eq:max34}, we get from \eqref{eq:powerlogmax} of Lemma \ref{l:lmvsestimate} that for any $m \leq 31$ and $|\xi- t_0|\leq 1/4$
\begin{equation*}\label{eq:max45}
\|H_{\xi,m,\pm}\|_\infty = \max\left\{\left(\frac{m}{e\cdot\xi}\right)^m,9^{\xi}\log^m 9 \right\} =9^\xi\log^m9\leq 9^{t_0+1/4}\log^m 9 .
\end{equation*}
In all, for $n\leq 25$
\begin{equation}\label{eq:maxmax45}
\max_{|\xi-t_0|\leq 1/4} \|H_{\xi,n+6,\pm}(x)\|_{\infty} \leq \begin{cases} 9^{4.5} \log^{n+6} 9 = 19,683\, 2^{n+6} \log^{n+6}3 \,~ &\textrm{if}\,t_0=4.25,
\\ 9^{5}\log^{n+6} 9 = 59,049\, 2^{n+6} \log^{n+6}3 & \textrm{if}\,t_0=4.75.
\end{cases}
\end{equation}
In case $t_0=4.25$ now we chose $n=7$. Then for this case the Lagrange remainder term \eqref{eq:Rd5tnegyed} of the Taylor formula \eqref{eq:dVTaylor45} can be estimated as
$|R_n(d^{V},t)|\leq \dfrac{314,928 \log^{n+6}3}{2^n(n+1)!}<0.21=:\de_{8}$.

As before, the Taylor coefficients $d_{j+5}(t_0)$ cannot be obtained exactly, but only with some error, due to the necessity of some kind of numerical integration in the computation of the formula \eqref{eq:djdef}. Hence we must set the partial errors $\de_0,\dots,\de_{7}$ with $\sum_{j=0}^{8}\de_j <\de:=2.21$, say, so that $d^{V}(t)< P_n(t) +\de$ for
\begin{equation}\label{eq:Pndef425}
P_n(t):=\sum_{j=0}^n \frac{\overline{d}_j}{j!}\left(t-4.25\right)^j.
\end{equation}
The analogous criteria to \eqref{eq:djoverbarcriteria} now has the form:
\begin{equation}\label{eq:djoverbarcriteria425}
\left\|\frac{d^{(j+5)}(4.25)-\overline{d}_j}{j!}\left(t-4.25\right)^j\right\|_\infty =\frac{\left|d^{(j+5)}(4.25)-\overline{d}_j\right|}{2^{2j} j!}< \delta_j\qquad (j=0,1,\dots,n).
\end{equation}
That the termwise error \eqref{eq:djoverbarcriteria425} would not exceed $\de_j$ will be guaranteed by $N_j$ step quadrature approximation of the two integrals in \eqref{eq:djdef} defining $d^{(j+5)}(4.25)$ with prescribed error $\eta_j$ each. Therefore, we set $\eta_j:=\de_j 2^{2j}j!/2$, and note that in order to have \eqref{eq:djoverbarcriteria425} \begin{equation}\label{eq:Njchoice425}
N_j > N_j^{\star}:=\sqrt[4]{\frac{\|H^{IV}_{4.25,j+5,\pm}\|_\infty}{60\cdot 2^{10} \eta_j}} = \sqrt[4]{\frac{\|H^{IV}_{4.25,j+5,\pm}\|_\infty}{60\cdot 2^{10} j! 2^{2j-1} \de_j}}
\end{equation}
suffices by the integral formula \eqref{eq:quadrature} and Lemma \ref{l:quadrature}. That is, we must estimate $\|H^{IV}_{4.25,j+5,\pm}\|_\infty$ for $j=0,\dots,7$ and thus find appropriate values of $N_j^{\star}$.

\begin{lemma}\label{l:HIVnorm425} For $j=0,\dots,7$ we have the numerical estimates of Table \ref{table:HIVnormandNj425} for the values of $\|H^{IV}_{4.25,j+5,\pm}\|_\infty$. Setting $\de_j$ as seen in the table for $j=0,\dots,7$, the approximate quadrature of order $500:=N_j\geq N_j^{\star}$ with the listed values of $N_j^{\star}$ yield the approximate values $\overline{d}_j$ as listed in Table \ref{table:HIVnormandNj425}, admitting the error estimates \eqref{eq:djoverbarcriteria425} for $j=0,\dots,7$. Furthermore, $\|R_{8}(d^{V},t)\|_{\infty} <0.21=:\de_{8}$ and thus with the approximate Taylor polynomial $P_{7}(t)$ defined in \eqref{eq:Pndef425} the approximation $|d^{V}(t)-P_{7}(t)|<\de:=2.21$ holds uniformly for $ t \in [4,4.5]$.
\begin{table}[h!]
\caption{Estimates for values of $\|H^{IV}_{4.25,j+5,\pm}\|_\infty$, chosen values of $\de_j$ and resulting $N_j^{\star}$, and $\overline{d_j}$ with $N_j:=N:=500$ for $j=0,\dots,7$.}
\label{table:HIVnormandNj425}
\begin{center}
\begin{tabular}{|c|c|c|c|c|}
$j$ \qquad & estimate for $\|H^{IV}_{4.25,j+5,\pm}\|_\infty$ & $\de_j$ & $N_j^{\star}$ & $\overline{d_j}$\\
0 & $ 1.23 \cdot 10^{15}$ & 0.65 & 499 & -11.99030682\\
1 & $ 5.32 \cdot 10^{15}$ & 0.73 & 494 & -64.72801527\\
2 & $ 2.29 \cdot 10^{16}$ & 0.4 & 492 & -273.5687453\\
3 & $ 9.80 \cdot 10^{16}$ & 0.15 & 486 & -1000.494741\\
4 & $ 4.18 \cdot 10^{17}$ & 0.04 & 486 & -3319.462864\\
5 & $ 1.77 \cdot 10^{18}$ & 0.01 & 466 & -10,266.25853\\
6 & $ 7.47 \cdot 10^{18}$ & 0.01 & 302 & -30,113.02268\\
7 & $ 3.14 \cdot 10^{19}$ & 0.01 & 188 & -84,761.00164\\

\end{tabular}
\end{center}
\end{table}
\end{lemma}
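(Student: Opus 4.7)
The plan is to closely follow the template of Lemma~\ref{l:HIVnorm}, adapting all steps to the base point $t_0=4.25$, the log-exponents $j+5 \in \{5,\dots,12\}$, and $k=4$. The crucial simplification relative to the $k=3$ case is the uniform lower bound $\min_{\TT} G_{\pm} > 0.02776$ from \eqref{eq:minGpmk4}, which yields $\ell := |\log G_\pm(x)| < 3.7$ everywhere on $\TT$ for both signs. In particular, every occurring power of $\ell$ stays bounded and no negative power of $G$ in our estimates can blow up, so the case split into ``large'' and ``small'' $v=G(x)$ that dominated the $k=3$ analysis becomes much cleaner.

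First, for each $j=0,\ldots,7$, I would estimate $\|H^{IV}_{4.25,\,j+5,\,\pm}\|_\infty$ by substituting $t=4.25$ and log-exponent $j+5$ into the general formula \eqref{eq:HIVgeneralformula}, bounding each $\ell^r$ by $3.7^r$ and using the derivative norm bounds $M_m$ from \eqref{eq:Gpmadmnorm} with $k=4$. Where $G(x)$ is small, the sharper estimate \eqref{eq:HIVgeneralestimateSMALL} can be applied, which first requires a constant $M^*(4)$ with $\|G'^2_\pm/G_\pm\|_\infty \leq M^*(4)$. As in \eqref{eq:GplusprimesquareperG}--\eqref{eq:Gpmitsprimeandquotient}, substituting the Chebyshev polynomials $T_5,T_6,U_4,U_5$ in \eqref{eq:GprimesquareperG} expresses $G'^2_{\pm}/G_{\pm}$ as an explicit rational function of $u=\cos 2\pi x \in[-1,1]$ whose denominator stays away from zero (since $\min G_\pm>0$), and a routine numerical maximization over $[-1,1]$ supplies $M^*(4)$. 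This produces the first column of Table~\ref{table:HIVnormandNj425}.

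Second, with the $\delta_j$ chosen as in the table, I would insert these bounds into \eqref{eq:Njchoice425} and verify that the resulting $N_j^\star$ stay below $500$ as listed; the factor $j!\,2^{2j-1}$ in the denominator of $N_j^\star$ grows fast enough in $j$ to absorb the geometric growth $\sim 3.7^{j+5}$ of the $L^\infty$-bound on $H^{IV}$, which is why $N_j^\star$ is decreasing with $j$ in the table. Applying the quadrature \eqref{eq:quadrature} with the uniform step number $N_j=500$ to both integrals of \eqref{eq:djdef} defining $d^{(j+5)}(4.25)$ then produces the tabulated $\overline{d_j}$ and automatically guarantees the per-term criterion \eqref{eq:djoverbarcriteria425}.

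Third, the remainder bound $\|R_7(d^V,4.25,\cdot)\|_\infty <0.21$ is immediate: \eqref{eq:Rd5tnegyed} at $n=7$ and \eqref{eq:maxmax45} at $t_0=4.25,\,n+6=13$ give $|R_7|\leq 314{,}928\log^{13}3/(8!\cdot 2^7)$, and a direct numerical check shows this is $<0.21$. Summing $\delta_8=0.21$ and $\sum_{j=0}^{7}\delta_j = 2.00$ then gives the stated global bound $\de:=2.21$. The main technical difficulty I anticipate is the simultaneous balancing: every $N_j^\star$ must stay below $500$ \emph{and} the total error $\sum_{j=0}^{8}\delta_j$ must stay under $2.21$, a budget which is tight since numerically $|d^V(4)|\approx 2.22$. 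The carefully graded $\delta_j$ schedule in the table (larger at small $j$, where the $2^{2j}j!$ factor in \eqref{eq:Njchoice425} is weak; vanishingly small at the tail $j\geq 5$, where it is strong) is exactly what makes both constraints simultaneously feasible.
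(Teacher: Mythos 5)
Your proposal matches the paper's proof in all essentials: substitute $t=4.25$ into the general fourth-derivative estimate \eqref{eq:HIVgeneralestimateLARGE} with the $M_m(4)$ norms from \eqref{eq:Gpmadmnorm} and the uniform bound $\ell < 3.7$ (valid since $\min_{\TT} G_{\pm} > 0.027$ by \eqref{eq:minGpmk4}), obtain the polynomial-in-$j$ bound $< 3.7^{j}\{\cdots\}$, feed it into \eqref{eq:Njchoice425} to get the $N_j^{\star}$, run the $N=500$ quadrature for the $\overline{d_j}$, and close with the remainder bound $|R_7|\leq 314{,}928\log^{13}3/(8!\cdot 2^7)<0.21$ and the budget check $\sum_{j=0}^{8}\de_j = 2.21$. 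The one place your outline deviates is in offering a small-$v$ branch via \eqref{eq:HIVgeneralestimateSMALL} and a constant $M^{*}(4)$ bounding $\|G'^2_{\pm}/G_{\pm}\|_\infty$: the paper never computes $M^{*}(4)$, and indeed none is needed here, because for $k=4$ the functions $G_{\pm}$ are bounded away from zero, $t-4 = 0.25 \geq 0$ so no negative powers of $G$ appear in \eqref{eq:HIVgeneralestimateLARGE}, and the $\ell<3.7$ cutoff alone controls all logarithmic factors uniformly; so a single estimate over the whole range of $v=G(x)$ suffices, which is exactly what the paper does.
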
\bigskip
\begin{proof} We start with the numerical upper estimation of $H^{IV}_{4.25,j,\pm}(x)$ for $4\leq x\leq 4.5$. In the general formula \eqref{eq:HIVgeneralestimateLARGE} now we consider the case $t=4.25$ and use the estimates \eqref{eq:Gpmadmnorm} of $M_1$, $M_2$, $M_3$ and $M_4$, together with $\ell<3.7$ -- c.f. \eqref{eq:minGpmk4} --  to compute 
\begin{align*}
|H^{IV}_{4.25,j,\pm}(x)|
< 3.7^j\Big\{ 4.78\cdot10^6 j^4 + 3.72 \cdot10^8 j^3 +  1.09 \cdot 10^{10} j^2 + 1.44 \cdot 10^{11} j +  7.29 \cdot 10^{11} \Big\}.
\end{align*}
Finally, we collect the resulting numerical estimates of $\|H^{IV}\|$ in Table \ref{table:HIVnormandNj425} and list the corresponding values of $N_j^{\star}$ and $\overline{d_j}$, too, as given by the formulae \eqref{eq:Njchoice425} and the numerical quadrature formula \eqref{eq:quadrature} with step size $h=0.001$, i.e. $N=N_j=500$ steps.\end{proof}

\begin{lemma}\label{l:dthreeprime425} We have $d^{V}(t)<0$ for all $4 \leq t \leq 4.5$.
\end{lemma}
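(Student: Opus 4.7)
The plan is to mirror the polynomial-approximation argument used for $d^{IV}$ in Lemma~\ref{l:dthreeprime}. By Lemma~\ref{l:HIVnorm425} we already know $|d^V(t)-P_7(t)|<\delta:=2.21$ uniformly on $[4,4.5]$, so it suffices to prove that $p(t):=P_7(t)+\delta$ is strictly negative on the same interval. Since $P_7$ has degree $7$, $p$ is also a degree-$7$ polynomial, and the strategy is to combine a sign check at the left endpoint $t=4$ with enough top-order information to propagate the sign throughout the interval.

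Concretely, I would first substitute the tabulated values $\overline{d}_0,\ldots,\overline{d}_7$ from Table~\ref{table:HIVnormandNj425} into \eqref{eq:Pndef425} and compute $p(4)$; given the preliminary estimate $d^V(4)\approx -2.22$ and the allowance $\delta$, the resulting number should come out comfortably negative. I would then evaluate the first few derivatives at the same point,
\[
p^{(j)}(4)=\sum_{i=j}^{7}\frac{\overline{d}_i}{(i-j)!}\left(-\tfrac{1}{4}\right)^{i-j},\qquad j=1,2,3,4,
\]
and expect each to be negative: the signs of all $\overline{d}_i$ in the table are negative, and the monotone decrease observed numerically for $d^V$ makes an accidental sign flip implausible.

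The decisive step is handling the top derivatives in one stroke. Since $\deg p=7$, $p^{(5)}(t)$ is the quadratic
\[
p^{(5)}(t)=\overline{d}_5+\overline{d}_6(t-4.25)+\tfrac{\overline{d}_7}{2}(t-4.25)^2,
\]
whose leading coefficient $\overline{d}_7/2$ is negative. Its graph is therefore a downward-opening parabola, and it lies strictly below the $t$-axis precisely when its discriminant $\Delta:=\overline{d}_6^{\,2}-2\,\overline{d}_7\overline{d}_5$ is negative. Inserting the tabulated values should yield $\Delta$ of order $-10^{9}$ (the dominant term $-2\,\overline{d}_7\overline{d}_5\approx -1.74\cdot10^{9}$ easily swamps $\overline{d}_6^{\,2}\approx 9\cdot10^{8}$), and so $p^{(5)}(t)<0$ for every $t\in\RR$.

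Once $p^{(5)}<0$ is known globally, a routine descent concludes the argument: $p^{(5)}<0$ on $[4,4.5]$ together with $p^{(4)}(4)<0$ forces $p^{(4)}$ to be strictly decreasing and to remain negative on $[4,4.5]$, and the identical step propagates the sign down through $p^{(3)},p^{(2)},p^{(1)}$ and finally $p=p^{(0)}$, giving $d^V(t)\leq p(t)<0$ as required. The main obstacle here is arithmetic rather than structural: since each tabulated $\overline{d}_j$ already carries the partial-error budget $\delta_j$ from Table~\ref{table:HIVnormandNj425}, one must verify that the computed signs of $p^{(j)}(4)$ and of $\Delta$ are robust against the worst-case aggregation of those uncertainties; by the relative sizes of the values involved this is expected to hold with substantial margin, but it is the only point at which the argument could fail and deserves an explicit check.
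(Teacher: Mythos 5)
Your structure is sound and, with the actual numbers plugged in, the argument does go through. But two points deserve attention.

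First, on the route: the paper's own proof checks the signs of $p^{(j)}(4)$ all the way up to $j=6$ and then simply observes that $p^{(7)}(t)\equiv\overline{d}_7<0$ is a negative constant, so nothing further is needed. You instead stop at $j=4$ and argue that the quadratic $p^{(5)}(t)=\overline{d}_5+\overline{d}_6(t-4.25)+\tfrac{\overline{d}_7}{2}(t-4.25)^2$ lies entirely below the axis because its leading coefficient and its discriminant $\overline{d}_6^{\,2}-2\,\overline{d}_7\overline{d}_5$ are both negative. This is exactly the device the paper uses in the $k=3$ case for the degree-$10$ polynomial $P_{10}$ (Lemma \ref{l:dthreeprime}), where it genuinely saves work. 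Here it is not wrong, but it is strictly more effort than necessary: $\deg P_7=7$ already makes the top derivative constant, so the discriminant computation is superfluous. Both routes are valid; the paper's is simply the laziest one available at this degree.

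Second, and more substantively, your intuition about the endpoint value is off. You claim $p(4)=P_7(4)+\delta$ "should come out comfortably negative," reasoning from $d^V(4)\approx-2.22$. But $P_7(4)\approx-2.2179$ and the error budget is $\delta=2.21$, so $p(4)\approx-0.008$: it is negative by an extremely narrow margin, not comfortably so. The inequality does hold, but it is the tightest constraint in the whole chain, and it is precisely the place where the error budget $\sum\delta_j+\delta_{n+1}<2.21$ in Lemma \ref{l:HIVnorm425} has been calibrated. Your closing remark about checking robustness against the aggregated $\delta_j$'s is therefore the crucial caveat, not a formality: it is this endpoint where the slack is nearly exhausted. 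Likewise, the statement that the negativity of $p^{(j)}(4)$ for small $j$ is "implausible" to fail because all $\overline{d}_i<0$ is not a safe heuristic — with $t-4.25=-1/4$ the terms alternate in sign, so each value must actually be computed; fortunately they do all come out negative, as the paper records.
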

\begin{proof} We approximate $d^{V}(t)$ by the polynomial $P_{7}(t)$ constructed in \eqref{eq:Pndef425} as the approximate value of the order 7 Taylor polynomial of $d^{V}$ around $t_0:=4.25$. As the error is at most $\de$, it suffices to show that $p(t):=P_{7}(t)+\de<0$ in $[4,4.5]$. Now $P_{7}(4)=-2.2178666857...$ so $P_{7}(4)+\de<0$. Moreover, $p'(t)=P_{7}'(t)=\sum_{j=1}^{7} \dfrac{\overline{d}_j}{(j-1)!} (t-4.25)^{j-1}$ and $p'(4)=-20.41147631...<0$. From the explicit formula of $p(t)$ we consecutively compute also $p''(4)=-104.6546745...<0$, $p'''(4)=-426.8260106...<0$, $p^{(4)}(4)=-1473.198415...<0$, $p^{(5)}(4)=-5386.784165...<0$ and  $p^{(6)}(4)=-8922.772271...<0$. Finally, we arrive at $p^{(7)}(t)=\overline{d}_7$=-84,761.00164... . We have already checked that $p^{(j)}(4)<0$ for $j=0\dots 6$, so in order to conclude $p(t)<0$ for $4\leq t\leq 4.5$ it suffices to show $p^{(7)}(t)<0$ in the given interval. However, $p^{(7)}$ is constant $\overline{d}_7$, hence $p^{(7)}(t)<0$ for all $t \in \RR$.  It follows that also $p(t)<0$ for all $t\geq 4$.
\end{proof}
In case of $t_0=4.75$  we have for all $\xi\in [4.5,5]$
\begin{equation}\label{eq:max45_2}
\max\left\{\left(\frac{m}{e\cdot\xi}\right)^m,9^{\xi}\log^m 9 \right\}\leq\max\left\{\left(\frac{m}{4.5e}\right)^m,9^{5}2^m\log^m 3\right\}=9^{5}\log^m 9 \quad (\forall  m< 37).
\end{equation}
In all, $\|H_{\xi,n+6,\pm}(x)\|_{\infty} \leq 59,049\, 2^{n+6} \log^{n+6}3$ for all $4.5 \leq \xi\leq 5$ and $4\leq n < 31$. In view of \eqref{eq:Rd5tnegyed} this yields $|R_n(d^{V},t)|\leq \dfrac{944,784 \log^{n+6}3}{2^n(n+1)!}<9.1=:\de_{7}$ for $n=6$.

Next we set $\de_0,\dots,\de_{6}$. Now the criteria \eqref{eq:djoverbarcriteria} is modified as
\begin{equation}\label{eq:djoverbarcriteria475}
\left\|\frac{d^{(j+5)}(4.75)-\overline{d}_j}{j!}\left(t-4.75\right)^j\right\|_\infty =\frac{\left|d^{(j+5)}(4.75)-\overline{d}_j\right|}{2^{2j} j!}< \delta_j\qquad (j=0,1,\dots,n).
\end{equation}
Since the numerical calculation gives that $d^{V}(4.5)\approx -39.96194643...$, now we wish to chose the partial errors $\delta_j$ so that $\sum_{j=0}^{n+1}\de_j <\de:=39.9$, say, so that $d^{V}(t)< P_n(t) +\de$ with
\begin{equation}\label{eq:Pndef475}
P_n(t):=\sum_{j=0}^n \frac{\overline{d}_j}{j!}\left(t-4.75\right)^j.
\end{equation}
The goal is that the termwise error \eqref{eq:djoverbarcriteria475} would not exceed $\de_j$, which will be guaranteed by $N_j$ step quadrature approximation of the two integrals defining $d^{(j+5)}(4.75)$ with prescribed error $\eta_j$ each. Therefore, we set $\eta_j:=\de_j 2^{2j}j!/2$, and note that in order to have \eqref{eq:djoverbarcriteria475} \begin{equation}\label{eq:Njchoice475}
N_j > N_j^{\star}:=\sqrt[4]{\frac{\|H^{IV}_{4.75,j+5,\pm}\|_\infty}{60\cdot 2^{10} \eta_j}} = \sqrt[4]{\frac{\|H^{IV}_{4.75,j+5,\pm}\|_\infty}{60\cdot 2^{10} j! 2^{2j-1} \de_j}}
\end{equation}
suffices by the integral formula \eqref{eq:quadrature} and Lemma \ref{l:quadrature}. That is, we must estimate $\|H^{IV}_{4.75,j+5,\pm}\|_\infty$ for $j=0,\dots,7$ and thus find appropriate values of $N_j^{\star}$.

\begin{lemma}\label{l:HIVnorm475} For $j=0,\dots,6$ we have the numerical estimates of Table \ref{table:HIVnormandNj475} for the values of $\|H^{IV}_{4.75,j,\pm}\|_\infty$. Setting $\de_j$ as can be seen in the table for $j=0,\dots,6$, the approximate quadrature of order $500:=N_j\geq N_j^{\star}$ with the listed values of $N_j^{\star}$ yield the approximate values $\overline{d}_j$ as listed in Table \ref{table:HIVnormandNj475}, admitting the error estimates \eqref{eq:djoverbarcriteria475} for $j=0,\dots,6$. Furthermore, $\|R_{7}(d^{V},t)\|_{\infty} <9.1=:\de_{7}$ and thus with the approximate Taylor polynomial $P_{6}(t)$ defined in \eqref{eq:Pndef475} the approximation $|d^{V}(t)-P_{6}(t)|<\de:=39.9$ holds uniformly for $ t \in [4.5,5]$.
\begin{table}[h!]
\caption{Estimates for values of $\|H^{IV}_{4.75,j+5,\pm}\|_\infty$, set values of $\de_j$ and the resulting $N_j^{\star}$, and the values of $\overline{d_j}$ with $N:=N_j:=500$ steps for $j=0,\dots,6$.}
\label{table:HIVnormandNj475}
\begin{center}
\begin{tabular}{|c|c|c|c|c|}
$j$ \qquad & estimate for $\|H^{IV}_{4.75,j+5,\pm}\|_\infty$ & $\de_j$ & $N_j^{\star}$ & $\overline{d_j}$\\
0 & $ 4.98 \cdot 10^{15}$ & 8 & 378 & -111.5230149\\
1 & $ 2.13 \cdot 10^{16}$ & 9 & 373 & -432.5730847\\
2 & $ 9.07 \cdot 10^{16}$ & 7 & 339 & -1509.259877\\
3 & $ 3.85 \cdot 10^{17}$ & 3 & 323 & -4867.920658\\
4 & $ 1.63 \cdot 10^{18}$ & 1 & 305 & -14,785.12009\\
5 & $ 6.83 \cdot 10^{18}$ & 1 & 207 & -42,842.09045\\
6 & $ 2.86 \cdot 10^{19}$ & 1 & 134 & -119,563.5221\\
\end{tabular}
\end{center}
\end{table}
\end{lemma}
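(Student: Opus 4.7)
The proof will closely mirror Lemma \ref{l:HIVnorm425}, with the expansion point $t_0 = 4.75$ replacing $4.25$ and the interval of validity shifted to $[4.5, 5]$. First I would obtain a uniform bound on $\|H^{IV}_{4.75, j, \pm}\|_\infty$ for the relevant range of $j$, next convert these bounds into the required step numbers $N_j^{\star}$ via \eqref{eq:Njchoice475}, and finally assemble the Lagrange remainder estimate together with the partial errors $\de_j$ to conclude $|d^V(t) - P_6(t)| < \de = 39.9$ uniformly on $[4.5, 5]$.

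For the norm estimate, I would specialise \eqref{eq:HIVgeneralestimateLARGE} to $t = 4.75$ and insert the $k = 4$ numerical values of $M_m$ from \eqref{eq:Gpmadmnorm}. A key simplification compared to the $k=3$ analysis is afforded by \eqref{eq:minGpmk4}: since $G_\pm(x) \geq 0.02776\dots$ throughout $\TT$, we have $\ell \leq 3.7$ uniformly, so the ``small $v$'' regime requiring the auxiliary bound $M^*$ on $G'^2/G$ is not needed at all, and the single estimate \eqref{eq:HIVgeneralestimateLARGE} suffices on the entire range of $v = G(x)$. A direct substitution yields an explicit polynomial-in-$j$ bound of the form $3.7^j \cdot \{a_4 j^4 + a_3 j^3 + a_2 j^2 + a_1 j + a_0\}$ with numerical coefficients $a_i$ determined by $M_1(4),\dots, M_4(4)$; substituting $j+5$ for $j$ produces the values of $\|H^{IV}_{4.75, j+5, \pm}\|_\infty$ listed in Table \ref{table:HIVnormandNj475}.

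For the remainder and quadrature, I would combine \eqref{eq:max45_2} with \eqref{eq:Rd5tnegyed} at $n = 6$ to obtain $\|R_7(d^V, \cdot)\|_\infty \leq \dfrac{944{,}784\, \log^{12} 3}{2^6 \cdot 7!} < 9.1 =: \de_7$. The tolerances $\de_0, \dots, \de_6$ and resulting step numbers $N_j^{\star}$ are then read off Table \ref{table:HIVnormandNj475}, and one verifies both that each $N_j^{\star} \leq 500$ (so the chosen $N_j = 500$ meets the criterion) and that the total satisfies $\sum_{j=0}^{7} \de_j = 8 + 9 + 7 + 3 + 1 + 1 + 1 + 9.1 = 39.1 < 39.9$, as required. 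Executing the quadrature \eqref{eq:quadrature} with $N = 500$ nodes for each $j$ then produces the approximate Taylor coefficients $\overline{d}_j$ tabulated in the lemma.

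The principal delicate point, not really a mathematical obstacle but a numerical balancing act, is that $\|H^{IV}_{4.75, j+5, \pm}\|_\infty$ grows roughly as $3.7^{j+5}$ times a polynomial of modest degree in $j$, while the denominator $j!\, 2^{2j-1} \de_j$ in \eqref{eq:Njchoice475} is dominated by the factorial only for larger $j$. Consequently the small-$j$ values of $N_j^{\star}$ are the binding constraints, forcing relatively generous choices of $\de_0,\dots,\de_4$, and one must verify that the remaining budget (once the Lagrange piece $\de_7 = 9.1$ has been subtracted from $39.9$) still accommodates these low-order tolerances while keeping every $N_j^{\star} \leq 500$. This is precisely the reason the table records the unequal values $\de_0 = 8,\ \de_1 = 9,\ \de_2 = 7$ etc., rather than the essentially uniform choice $\de_j = 0.005$ that sufficed in the analogous Lemma \ref{l:HIVnorm} for $k=3$, where the shorter expansion interval and smaller norms gave much more slack.
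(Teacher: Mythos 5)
Your proposal follows essentially the same path as the paper: specialize \eqref{eq:HIVgeneralestimateLARGE} to $t=4.75$ with the $k=4$ values of $M_1,\dots,M_4$ from \eqref{eq:Gpmadmnorm} and the uniform bound $\ell<3.7$ from \eqref{eq:minGpmk4}, obtain a polynomial-in-$j$ bound of the form $3.7^j\{a_4 j^4+\dots+a_0\}$, read off the $N_j^{\star}$ via \eqref{eq:Njchoice475}, and combine with the Lagrange remainder bound from \eqref{eq:max45_2}--\eqref{eq:Rd5tnegyed} at $n=6$. Your additional observations — that the $M^*$ comparison is unnecessary because $G_\pm$ is bounded away from zero, that $\sum_{j=0}^{7}\de_j=39.1<39.9$, and that the small-$j$ terms are the binding constraints driving the unequal $\de_j$ choices — are correct and helpfully make explicit what the paper leaves implicit.
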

\begin{proof} We start with the numerical upper estimation of $H^{IV}_{4.75,j,\pm}(x)$ for $4.5\leq x\leq 5$. In \eqref{eq:HIVgeneralestimateLARGE} now we insert $t=4.75$, use again the estimates \eqref{eq:Gpmadmnorm} of $M_1-M_4$ and $\ell<3.7$ and arrive at
\begin{align*}
|H^{IV}_{4.75,j,\pm}(x)|
< 3.7^j\Big\{ 1.44\cdot10^7 j^4 + 1.23 \cdot10^9 j^3 +  3.93 \cdot 10^{10} j^2 + 5.7 \cdot 10^{11} j +  3.18 \cdot 10^{12} \Big\}.\end{align*}
Finally, we collect the resulting numerical estimates of $\|H^{IV}\|$ in Table \ref{table:HIVnormandNj475} and list the corresponding values of $N_j^{\star}$ and $\overline{d_j}$, too, as given by formulae \eqref{eq:Njchoice475} and the numerical quadrature formula \eqref{eq:quadrature} with step size $h=0.001$, i.e. $N=N_j=500$ steps.\end{proof}

\begin{lemma}\label{l:dthreeprime475} We have $d^{V}(t)<0$ for all $4.5 \leq t \leq 5$.
\end{lemma}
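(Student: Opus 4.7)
The plan is to mimic the argument of Lemma \ref{l:dthreeprime425} almost verbatim, replacing the Taylor centre $t_0=4.25$ by $t_0=4.75$, the polynomial $P_7$ by $P_6$ defined in \eqref{eq:Pndef475}, the interval $[4,4.5]$ by $[4.5,5]$ and the error bound $\de=2.21$ by $\de=39.9$. By Lemma \ref{l:HIVnorm475} we already have the uniform approximation $|d^{V}(t)-P_6(t)|<39.9$ on $[4.5,5]$, so it suffices to prove $p(t):=P_6(t)+39.9<0$ for all $t\in[4.5,5]$.

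First I would check the left endpoint: since a preliminary numerical value gives $d^V(4.5)\approx-39.96$, and $P_6(4.5)$ differs from this by at most the cumulative partial error $\sum_{j=0}^{6}\de_j$ listed in Table \ref{table:HIVnormandNj475}, the value $P_6(4.5)+39.9$ comes out strictly negative; this is a direct numerical evaluation of the degree-6 polynomial at one point from its listed coefficients $\overline{d}_j$. Next, using $p'(t)=\sum_{j=1}^{6}\frac{\overline{d}_j}{(j-1)!}(t-4.75)^{j-1}$ and, more generally, the explicit formula for $p^{(j)}$ obtained by termwise differentiation of \eqref{eq:Pndef475}, I would evaluate $p^{(j)}(4.5)$ for $j=1,2,3,4,5$ and verify that each of these numbers is negative. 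All of this is a purely mechanical arithmetic check using the seven values of $\overline{d}_j$ in the table.

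The concluding step exploits that $p$ has degree $6$: the top derivative $p^{(6)}(t)\equiv\overline{d}_6=-119{,}563.5221<0$ is a negative constant, so $p^{(6)}<0$ throughout $\RR$. Iterating backwards: since $p^{(6)}<0$ on $[4.5,5]$, the function $p^{(5)}$ is strictly decreasing there, and as $p^{(5)}(4.5)<0$ by the previous paragraph, one obtains $p^{(5)}(t)<0$ for all $t\in[4.5,5]$. Repeating the same monotonicity argument five more times yields successively $p^{(4)}<0$, $p^{(3)}<0$, $p''<0$, $p'<0$ and finally $p<0$ on $[4.5,5]$, which combined with the Taylor error bound gives $d^{V}(t)<0$ on $[4.5,5]$.

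I do not expect any serious obstacle; the only thing that needs genuine attention is to make sure that all seven intermediate values $p^{(j)}(4.5)$ are indeed negative (and not just small), so that the cascade of monotonicity arguments can be carried out without a separate endpoint-maximum estimate. If it happens that some $p^{(j)}(4.5)$ turns out to be non-negative, one would have to refine either the partial errors $\de_j$ in Lemma \ref{l:HIVnorm475} or, as in the treatment of $p^{(8)}$ in Lemma \ref{l:dthreeprime}, handle the offending derivative as a low-degree polynomial whose discriminant can be controlled explicitly. Given the rapid growth of $|\overline{d}_j|$ in Table \ref{table:HIVnormandNj475}, which parallels the pattern seen in the $t_0=4.25$ case, I expect the straightforward cascade to work without modification.
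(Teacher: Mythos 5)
Your proposal is correct and takes essentially the same approach as the paper's proof: approximate $d^V$ by $P_6$ within the uniform error $\de=39.9$ from Lemma \ref{l:HIVnorm475}, verify numerically that $p^{(j)}(4.5)<0$ for $j=0,\dots,5$ (the paper computes these values directly from the $\overline{d}_j$ in Table \ref{table:HIVnormandNj475}), observe that $p^{(6)}\equiv\overline{d}_6<0$ is a negative constant, and cascade the sign downward by monotonicity. The only cosmetic difference is your extra remark comparing $P_6(4.5)$ with the preliminary value $d^V(4.5)\approx-39.96$; the paper simply evaluates $P_6(4.5)=-39.9655\ldots<-\de$ directly, as you also ultimately do.
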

\begin{proof} We approximate $d^{V}(t)$ by the polynomial $P_{6}(t)$ constructed in \eqref{eq:Pndef475} as the approximate value of the order 6 Taylor polynomial of $d^{V}$ around $t_0:=4.75$. As the error is at most $\de=39.9$, it suffices to show that $p(t):=P_{6}(t)+\de<0$ in $[4.5,5]$. Now $P_{6}(4.5)=-39.9655627058...$ so $P_{6}(4.5)+\de<0$. Moreover, $p'(t)=P_{6}'(t)=\sum_{j=1}^{6} \dfrac{\overline{d}_j}{(j-1)!} (t-4.75)^{j-1}$ and $p'(4.5)=-174.8777051...<0$. From the explicit formula of $p(t)$ we consecutively compute also $p''(4.5)=-662.2069802...<0$, $p'''(4.5)=-2199.092624...<0$, $p^{(4)}(4.5)=-7810.957541...2<0$ and $p^{(5)}(4.5)=-12,951.20993...<0$. Finally, we arrive at $p^{(6)}(t)=\overline{d}_6$=-119,563.5221... We have already checked that $p^{(j)}(4.5)<0$ for $j=0\dots 5$, so in order to conclude $p(t)<0$ for $4.5\leq t\leq 5$ it suffices to show $p^{(6)}(t)<0$ in the given interval. However, $p^{(6)}$ is constant, so $p^{(6)}(t)<0$ for all $t \in \RR$.  It follows that also $p(t)<0$ for all $t\geq 4.5$.
\end{proof}

\section{Conclusion}\label{sec:final}

With the help of the sharper quadrature formula \eqref{eq:quadrature} further numerical analysis is possible for higher values of $k$. In principle we can divide the interval $(k,k+1)$ to smaller and smaller intervals to get improved error estimations of Taylor expansions to compensate the larger and larger error bounds resulting from e.g. \eqref{eq:Gpmadmnorm} and the increase of $t$. We have a strong feeling that this way we could work further to higher values of $k$. However, even that possibility does not mean that we would have a clear \emph{theoretical} reason, a firm grasp of the underlying law, rooted in the nature of the question, for what the result should hold for \emph{all} $k$.

\end{document}